\newtheorem{theorem}{Theorem}
\newtheorem{corollary}{Corollary}[theorem]
\newtheorem{claim}{Claim}[theorem]
\newtheorem{lemma}[theorem]{Lemma}
\author{Hadi Alizadeh\thanks{Corresponding author. Email address: halizadeh@gtu.edu.tr}
  \and Didem Gözüpek}
\title[Upper paired domination versus upper domination]{ Upper paired domination versus upper domination}
\affiliation{
  Department of Computer Engineering, Gebze Technical University, Kocaeli, Turkey}
\keywords{Paired dominating set, upper paired domination, upper domination}
\begin{document}
\publicationdetails{23}{2021}{3}{17}{7331}
\maketitle
\begin{abstract}
 A paired dominating set $P$ is a dominating set with the additional property that $P$ has a perfect matching. While the maximum cardinality of a minimal dominating set in a graph $G$ is called the upper domination number of $G$, denoted by $\Gamma(G)$, the maximum cardinality of a minimal paired dominating set in $G$ is called the upper paired domination number of $G$, denoted by $\Gamma_{pr}(G)$. By Henning and Pradhan (2019), we know that $\Gamma_{pr}(G)\leq 2\Gamma(G)$ for any graph $G$ without isolated vertices. We focus on the graphs satisfying the equality $\Gamma_{pr}(G)= 2\Gamma(G)$. We give characterizations for two special graph classes: bipartite and unicyclic graphs with $\Gamma_{pr}(G)= 2\Gamma(G)$ by using the results of Ulatowski (2015). Besides, we study the graphs with $\Gamma_{pr}(G)= 2\Gamma(G)$ and a restricted girth. In this context, we provide two characterizations: one for graphs with $\Gamma_{pr}(G)= 2\Gamma(G)$ and girth at least 6 and the other for $C_3$-free cactus graphs with $\Gamma_{pr}(G)= 2\Gamma(G)$. We also pose the characterization of the general case of $C_3$-free graphs with $\Gamma_{pr}(G)= 2\Gamma(G)$ as an open question.
\end{abstract}

\section{Introduction}
\label{sec:in}

In a graph $G=(V,E)$, a set $D\subseteq V(G)$ is called a \textit {dominating set} if every vertex of $G$ is either in $D$ or adjacent to a vertex in $D$. We say that a dominating set $D$ is \textit {minimal} if no proper subset of $D$ is a dominating set in $G$. The \textit {domination number} of $ G $, denoted by $\gamma(G)$, is the cardinality of a minimum dominating set in $G$, whereas the \textit {upper domination number} of $G$, denoted by $\Gamma(G)$, is the maximum cardinality of a minimal dominating set in $G$.\par
Different variants of domination concept exist in the literature. One of these variants is \textit{paired domination}, which was first put forward by \cite{pdomination}. A \textit{matching} $M$ in a graph $G$ is a set of pairwise non-adjacent edges. If a matching $M$ matches all vertices of a graph $G$, we call $M$ a \textit{perfect matching}. A \textit{paired dominating set (PDS)} of a graph $G$ is a dominating set $D$ of $G$ with the additional property that the subgraph $G[D]$ induced by $D$ contains a perfect matching $M$. In a similar way, the \textit{paired domination number} of a graph $G$, denoted by $\gamma_{pr}(G)$, is the minimum cardinality of a PDS in $G$. In addition, the \textit{upper paired domination number} of a graph $G$, denoted by $\Gamma_{pr}(G)$, is the maximum cardinality of a minimal PDS in $G$.\par

Paired domination is a well-studied subject in the literature. The existing literature on paired domination can be grouped into three major categories. One category includes research works focusing on investigating paired domination number in different graph classes such as trees (\cite{treedomin}), claw-free cubic graphs (\cite{clawfreecubic}), generalized claw-free graphs (\cite{gclawfree}), chordal bipartite graphs (\cite{chordbipartite}), and strongly orderable graphs (\cite{stronglyorder}). Another category contains research works presenting lower bounds for paired domination number (\cite{pdtreelowbound}, \cite{lowerbound2}, \cite{lowerbound1}) and characterization results with respect to a specific relationship between the paired domination number and the graph order (\cite{pairdom23n}, \cite{pairdom35n}, \cite{gamma=n-2}). For further information about lower bounds for paired domination number, the reader is referred to a comprehensive survey by (\cite{Survey}). Furthermore, studies on the relationship between paired domination number and other domination variants such as total domination number (\cite{treetotalpair}, \cite{pairresult}, \cite{totalpair}), induced paired-domination (\cite{inducedpaired}), and double domination (\cite{doubledom}) form the third category.\par

In this paper, we restrict our attention to the concept of upper paired domination, which is a relatively unexplored area of the literature on paired domination. Following the notation by~\cite{Gammapralgo}, we denote by \textit{Upper-PDS}, the problem of finding a $\Gamma_{pr}$-set in a graph $G$. ~\cite{Gammapralgo} studied the concept of upper paired domination from an algorithmic perspective. They showed that while the decision version of \textit{Upper-PDS} problem is NP-complete for general graphs, for some special graph classes including threshold graphs, chain graphs, and proper interval graphs, \textit{Upper-PDS} is solvable in polynomial time.\par
There exist few research works with structural results regarding upper paired domination in the literature. \cite{uptotpr} studied the relationship between the upper paired domination number and the upper total domination number of a graph. They showed that for a graph $G$ with no isolated vertex it holds that $\Gamma_t \geq 1/2(\Gamma_{pr} +2)$. In addition, they gave a characterization for the trees achieving the equality in this relationship. Restricted to the case of connected claw-free graphs $G$, ~\cite{updomclawfree} established upper bounds on $\Gamma_{pr}(G)$ with respect to the graph order $n$ and minimum degree $\delta(G)$. Another available result is due to \cite{implication} where the author provides characterizations for graphs with $\Gamma_{pr}(G)=n$ and $\Gamma_{pr}(G)=n-1$. \par

Due to a result by~\cite{Gammapralgo}, we know that $\Gamma_{pr}(G)\leq 2\Gamma(G)$ for any graph $G$ without isolated vertices. In this paper we focus on graphs which have the property $\Gamma_{pr}(G)= 2\Gamma(G)$. By using the results of \cite{implication}, we give characterizations for two special graph classes: \textit{bipartite} and \textit{unicyclic} graphs with $\Gamma_{pr}(G)= 2\Gamma(G)$. Besides, we study the graphs with $\Gamma_{pr}(G)= 2\Gamma(G)$ with a restricted girth. In this context, we give a complete characterization for graphs with $\Gamma_{pr}(G)= 2\Gamma(G)$ and girth at least 6. Furthermore, for the case of girth at least 4, we characterize $C_3$-free cactus graphs with $\Gamma_{pr}(G)= 2\Gamma(G)$ and leave the characterization of the general case of $C_3$-free graphs with $\Gamma_{pr}(G)= 2\Gamma(G)$ as an open question.  

In Section \ref{prelim}, after introducing some graph-theoretic notations and definitions, we provide some known results in the literature regarding upper paired domination and upper domination. We then proceed to the graphs with $\Gamma_{pr}(G)= 2\Gamma(G)$ in Section \ref{Gpr==2G}, where we focus particularly on bipartite graphs, unicyclic graphs, and graphs with restricted girth.   \par

\section{Preliminaries}\label{prelim}
A \textit{graph} $G$ is an ordered pair $(V (G) , E(G))$, where $V(G)$ is the set of vertices and $E(G)$ is the set of edges each connecting a pair of vertices. Throughout this paper we assume that $G$ is a \textit{simple} graph, that is, a finite, undirected, and loopless graph without multiple edges. The number of vertices of a graph $G$ is called the \textit{order} of $G$. We mean by \textit{neighborhood} of a vertex $v$, denoted by $N(v)$, the set of all vertices that are adjacent to that vertex. The cardinality of $N(v)$ is called the \textit{degree} of vertex $v$ and it is denoted by $deg(v)$. Furthermore, by $\delta(G)$ (resp. $\Delta(G)$ ), we refer to the \textit{minimum} (resp. \textit{maximum}) degree of $G$.\par 

A \textit{subgraph} of a graph $G$ is a graph $H$ such that $V(H)\subseteq V(G)$ and $E(H)\subseteq E(G)$. Furthermore, a subgraph of $G$ \textit{induced} by a set $S\subseteq V(G)$, denoted by $G[S]$, is a graph formed from the vertices of $S$ and all edges connecting the pairs of vertices in $S$. We denote by $P_n$, $C_n$, and $K_n$ a path, a cycle, and a complete graph on $n$ vertices, respectively. The \textit{girth} of a graph is the length of the shortest cycle of that graph. We say that a graph $G$ is \textit{unicyclic} if $G$ is a connected graph containing exactly one cycle. A \textit{cactus} graph is a connected graph in which every edge lies on at most one cycle. \par

The \textit{distance} between two vertices in a graph is the number of edges in a shortest path connecting those vertices. We show by $N_i(v)$ the set of all vertices at distance $i$ from $v$. Notice that with this notation, $N_1(v)$ corresponds to the neighborhood of $v$ which we simply denote by $N(v)$. The \textit{private neighborhood} of a vertex $v\in S$, denoted by $pn(v,S)$, is defined as: $pn(v,S)=\{u \in V(G) \,|\, N(u)\cap S=\{v\}\}$. We call each vertex in $pn(v,S)$ a \textit{private neighbor} of $v$ with respect to set $S$. Furthermore, the \textit{external private neighborhood} of a vertex $v$ with respect to a set $S$, denoted by $epn(v, S)$, is a set containing the private neighbors of $v$ which are not in $S$, that is, $epn(v, S)= pn(v,S)\setminus S$.\par

A \textit{matching} $M$ in a graph $G$ is a set of pairwise non-adjacent edges. If a matching $M$ matches all vertices of a graph $G$, we call $M$ a \textit{perfect matching}. Two vertices are said to be \textit{partners} if they are joined by an edge of a perfect matching $M$.

A set $I$ of vertices in a graph $G$ is an \textit{independent} set if no two vertices in $I$ are adjacent. An independent set is said to be \textit{maximal} if no other independent set properly contains it. The maximum size of an independent set in a graph $G$, denoted by $\alpha(G)$, is called the \textit{independence number} of $G$.\par

Notice that any independent set $S$ in a graph $G$ can be extended to a maximal independent set $I$ in $G$. In addition, every maximal independent set is a minimal dominating set. We will frequently use these two arguments in our forthcoming proofs.  \par

The most related results in the literature which provide useful tools for our work is due to ~\cite{implication}. We state the first result of Ulatowski in Lemma \ref{Gammapr=n}, which describes the graphs with upper domination number equal to their order.

\begin{lemma}\label{Gammapr=n}
(\cite{implication}) For a graph $G$ of order $n$, $\Gamma_{pr}(G)=n$ if and only if $G$ is isomorphic to $mK_2$.
\end{lemma}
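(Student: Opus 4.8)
The plan is to argue both directions straight from the definition of a minimal paired dominating set: $P$ is a minimal PDS if $P$ is a PDS and no \emph{proper} subset of $P$ is a PDS.

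For the ``if'' direction, I would show that when $G\cong mK_2$ the set $V(G)$ is the \emph{only} PDS. Writing the components as $\{a_i,b_i\}$, any PDS must dominate $a_i$, hence contains $a_i$ or $b_i$; and if it contains exactly one of $a_i,b_i$, that vertex has no neighbour inside the set and so cannot be covered by a perfect matching of the induced subgraph, contradicting the definition of a PDS. Thus every PDS contains both $a_i$ and $b_i$ for each $i$, i.e. equals $V(G)$. Being the unique PDS, $V(G)$ is trivially minimal, so $\Gamma_{pr}(G)=|V(G)|=n$.

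For the converse, suppose $\Gamma_{pr}(G)=n$. The only $n$-element subset of $V(G)$ is $V(G)$ itself, so $V(G)$ must be a minimal PDS; in particular $G$ has no isolated vertex and admits a perfect matching $M$, whence every component of $G$ has even order. I would then argue by contradiction: assume $G\not\cong mK_2$, so some component $C$ has $|V(C)|\ge 4$, with induced perfect matching $M_C=M\cap E(C)$. Pick an edge $ab\in M_C$; since $C$ is connected and $\{a,b\}\subsetneq V(C)$, one endpoint, say $a$, has a neighbour $c\notin\{a,b\}$. Letting $d$ be the $M_C$-partner of $c$, the vertices $a,b,c,d$ are four distinct vertices with $ab,cd\in M_C$ and $ac\in E(C)$. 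Now put $S=V(C)\setminus\{b,d\}$ and $P=S\cup\bigl(V(G)\setminus V(C)\bigr)$. The set $P$ is a PDS of $G$ that is a nonempty proper subset of $V(G)$: the edge set $\bigl(M\setminus\{ab,cd\}\bigr)\cup\{ac\}$ is a perfect matching of $G[P]$, and $P$ dominates $G$ because the only vertices of $G$ outside $P$ are $b$ and $d$, which are dominated by $a\in P$ and $c\in P$ respectively. This contradicts the minimality of $V(G)$, so $G\cong mK_2$.

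The verifications that the displayed edge set really is a perfect matching of $G[P]$ and that $P$ dominates $G$ are routine once $a,b,c,d$ are fixed. The only place needing a little care is locating this configuration, i.e. that in a connected graph of order at least $4$ carrying a perfect matching one can find a matching edge one of whose endpoints has a neighbour outside that edge — but this is immediate from connectivity, since otherwise $\{a,b\}$ would be a whole component. I do not expect a real obstacle here; the one subtlety worth flagging is that the argument relies on the ``no proper subset is a PDS'' notion of minimality (rather than the weaker ``removing a matched pair never yields a PDS''), and it is precisely this reading that makes the statement correct.
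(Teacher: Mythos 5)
Your argument is correct. Note, however, that the paper does not prove this lemma at all: it is quoted from \cite{implication} (Ulatowski) as a known result, so there is no internal proof to compare against; your write-up in effect supplies a self-contained elementary proof. Both directions check out: for $mK_2$ any PDS must contain both endpoints of every component (a lone endpoint would be unmatched in the induced subgraph), so $V(G)$ is the unique, hence minimal, PDS; and in the converse direction, if $V(G)$ is a minimal PDS with perfect matching $M$ and some component $C$ has order at least $4$, your swap — pick $ab\in M$, a neighbour $c$ of $a$ outside $\{a,b\}$ with $M$-partner $d$, and replace $ab,cd$ by $ac$ — does produce a proper subset $V(G)\setminus\{b,d\}$ that is again a PDS (it dominates $b$ via $a$ and $d$ via $c$, and the modified matching is perfect on it), contradicting minimality. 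The distinctness of $a,b,c,d$ and the existence of $c$ from connectivity are handled correctly. Your closing caveat is also well placed: the lemma genuinely needs minimality in the sense that no proper subset is a PDS, which is the definition used in this line of work; under the weaker ``no matched pair can be deleted'' reading the statement would fail (e.g.\ $V(P_4)$ with the matching of its two end edges cannot be shrunk by deleting a matched pair, yet $P_4\not\cong mK_2$), and your construction exploits precisely that the deleted pair $\{b,d\}$ is not an edge of $M$.
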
\par
Here, $mK_2$ denotes a graph with $m\geq1$ copies of disjoint $K_2$. The result in Lemma \ref{Gammapr=n} implies that $K_2$ is the only connected graph satisfying $\Gamma_{pr}(G)=n$. The next result establishes an upper bound for the upper domination number of a connected graph.  

\begin{lemma}\label{Gammaprupbound}
(\cite{implication}) If $G$ is a connected graph of order $n\geq 3$, then $\Gamma_{pr}(G)\leq n-1$.
\end{lemma}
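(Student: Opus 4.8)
The plan is to obtain this as an immediate consequence of Lemma~\ref{Gammapr=n}. Since $G$ is connected of order $n\geq 3$ it has no isolated vertex, so it possesses a paired dominating set (for instance the vertex set of a maximal matching), and any paired dominating set contains an inclusionwise minimal one; hence $\Gamma_{pr}(G)$ is well defined. Suppose, for contradiction, that $\Gamma_{pr}(G)=n$. Then the ``only if'' direction of Lemma~\ref{Gammapr=n} gives $G\cong mK_2$ for some $m\geq 1$. But a disjoint union of copies of $K_2$ is connected only when $m=1$, which forces $n=2$ and contradicts $n\geq 3$. Hence $\Gamma_{pr}(G)\neq n$, and since trivially $\Gamma_{pr}(G)\leq n$, we conclude $\Gamma_{pr}(G)\leq n-1$.

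If a proof independent of Lemma~\ref{Gammapr=n} is wanted, I would show directly that $V(G)$ is not a minimal paired dominating set. Assume it were, witnessed by a perfect matching $M$ of $G$; then $n$ is even, hence $n\geq 4$, and since $G$ is connected we have $|E(G)|\geq n-1>n/2=|M|$, so there is an edge $uv\in E(G)\setminus M$. Let $u'$ and $v'$ be the $M$-partners of $u$ and of $v$; note that $u,u',v,v'$ are pairwise distinct, since no vertex is $M$-matched to two vertices and $uv\notin M$. Put $D:=V(G)\setminus\{u',v'\}$. Then $(M\setminus\{uu',vv'\})\cup\{uv\}$ is a perfect matching of $G[D]$, and $D$ dominates $G$ because $u'$ is adjacent to $u\in D$, $v'$ is adjacent to $v\in D$, and every other vertex of $G$ already lies in $D$. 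Hence $D$ is a paired dominating set with $|D|=n-2<n$, contradicting the minimality of $V(G)$.

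I do not anticipate a genuine obstacle: the statement is essentially a corollary of Lemma~\ref{Gammapr=n}. The only points that call for a little care are the well-definedness of $\Gamma_{pr}(G)$ on the graphs considered, the fact that ``minimal'' must be read as inclusionwise minimal (so that, in the direct argument, exhibiting a strictly smaller paired dominating set using a different matching suffices), and, in that direct argument, the routine verifications that $(M\setminus\{uu',vv'\})\cup\{uv\}$ is indeed a perfect matching of $G[D]$ and that removing $u'$ and $v'$ leaves a dominating set. I would present the short proof via Lemma~\ref{Gammapr=n} in the paper and, at most, record the direct argument as a remark.
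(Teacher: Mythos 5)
Your argument is correct, but note that the paper does not prove this statement at all: Lemma~\ref{Gammaprupbound} is quoted from \cite{implication} as a known result, so there is no in-paper proof to compare against. Your first derivation is sound: if $\Gamma_{pr}(G)=n$ then the ``only if'' direction of Lemma~\ref{Gammapr=n} forces $G\cong mK_2$, and connectedness gives $m=1$, i.e.\ $n=2$, contradicting $n\geq 3$; together with the trivial bound $\Gamma_{pr}(G)\leq n$ this yields the claim. Your preliminary remarks are also in order (the endpoints of a maximal matching form a paired dominating set in any graph without isolated vertices, so $\Gamma_{pr}(G)$ is well defined, and ``minimal'' is indeed inclusionwise minimality). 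The second, self-contained argument is likewise correct and arguably more informative, since it does not lean on the full characterization in Lemma~\ref{Gammapr=n}: assuming $V(G)$ is a minimal PDS with perfect matching $M$ forces $n$ even, connectedness gives $|E(G)|\geq n-1>|M|$, and swapping an edge $uv\notin M$ for the two matching edges at $u$ and $v$ produces the paired dominating set $V(G)\setminus\{u',v'\}$, contradicting minimality; the verifications you flag (distinctness of $u,u',v,v'$, that the modified matching is perfect on the smaller set, and that domination survives the deletion) all go through. Either version would serve as a legitimate proof of the cited lemma; in the paper's context the citation suffices, but your direct exchange argument could reasonably be recorded as a remark.
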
\par
In the same work, Ulatowski characterized the graphs satisfying the equality in the bound of Lemma \ref{Gammaprupbound}. However, before stating this result, we recall some definitions and notations. The \textit{subdivided star} $K_{1,t}^*$ is the graph obtained from a star $K_{1,t}$ by subdividing every edge once. Let $K_{1,t}^{*\Delta}$ for $\Delta\geq 0$ be a family of graphs obtained by attaching $\Delta$ triangles to the central vertex of a $K_{1,t}^*$  (see Figure \ref{familyk1tdelta}).\par
 Lemma \ref{Gammapr=n-1} states the second result of Ulatowski regarding the graphs with upper domination number equal to one less than their order. 
\begin{figure}[t]
\centering
\includegraphics[width=0.45\textwidth]{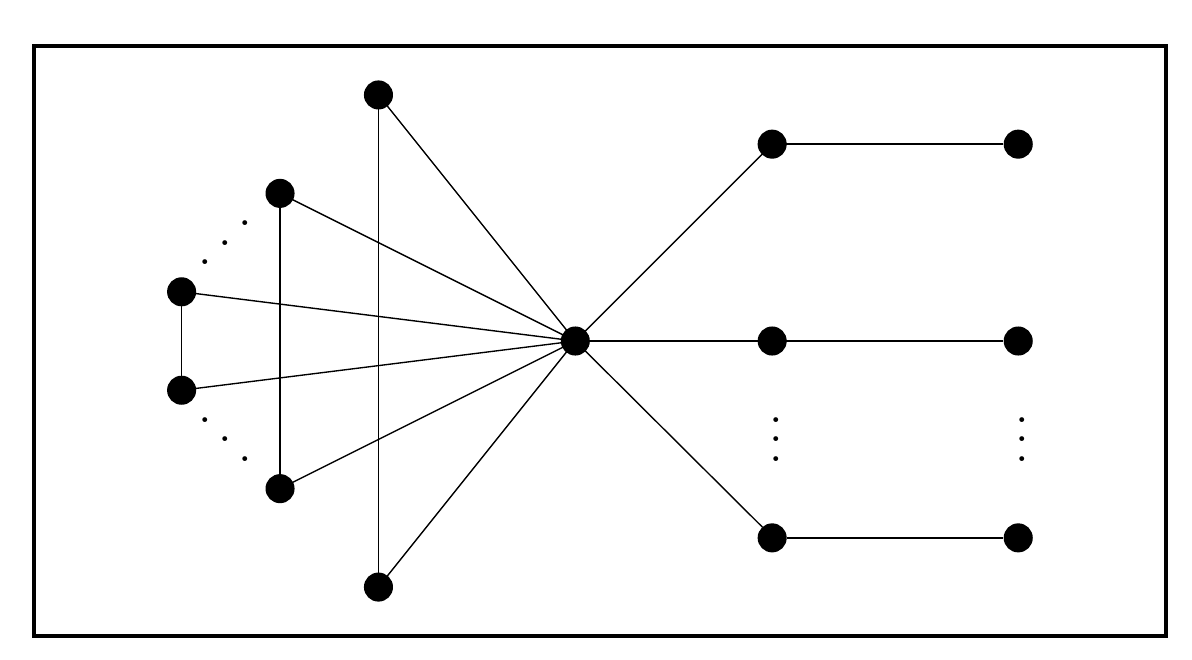}
\caption{A graph in the family $K_{1,t}^{*\Delta}$ }
\label{familyk1tdelta}
\end{figure}

\begin{lemma}\label{Gammapr=n-1}
(\cite{implication}) Let $G$ be a connected graph of order $n\geq 3$. Then $\Gamma_{pr}(G)=n-1$ if and only if $G \in\{C_3, C_5, K_{1,t}^{*\Delta}\}$.
\end{lemma}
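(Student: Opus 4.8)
The plan is to prove both directions of the characterization $\Gamma_{pr}(G)=n-1$ $\iff$ $G\in\{C_3,C_5,K_{1,t}^{*\Delta}\}$ for a connected graph $G$ of order $n\ge 3$. The backward direction is the routine part: for each of $C_3$, $C_5$, and a generic member of $K_{1,t}^{*\Delta}$ one exhibits a minimal paired dominating set of size exactly $n-1$ and invokes Lemma \ref{Gammaprupbound} to see this cannot be beaten. For $C_3$ take any two vertices; for $C_5$ take any four consecutive vertices on the cycle, matched into two pairs; for $K_{1,t}^{*\Delta}$ take all vertices except one carefully chosen leaf of the subdivided star (keeping the support vertex adjacent to that leaf in the set so that a perfect matching still exists and domination is preserved), and verify minimality by checking that each matched pair has a private neighbor forcing its presence.

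The forward direction is the substantive part. Assume $G$ is connected of order $n\ge 3$ with $\Gamma_{pr}(G)=n-1$, and let $P$ be a minimal PDS with $|P|=n-1$, so that $V(G)\setminus P=\{w\}$ for a single vertex $w$, and let $M$ be a perfect matching of $G[P]$. The first step is to extract strong local structure from the minimality of $P$: for every matched pair $\{x,y\}\in M$, at least one of $x,y$ must have a private neighbor with respect to $P$ (otherwise $P\setminus\{x,y\}$ would still be a PDS, contradicting minimality) — this is the standard minimality lemma for paired domination, which I would state and prove as a preliminary claim. Since $w$ is the only vertex outside $P$, every such private neighbor is either $w$ itself or a vertex of $P$ that is a private neighbor of its own partner only through internal adjacencies; I would analyze how many pairs can "point at" $w$ and how many must be internally self-justified, which sharply limits $|P|$ and hence $n$.

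The key steps, in order: (1) prove the minimality claim above; (2) show that $\deg(w)$ is small — since $w$ must be dominated, it has a neighbor in $P$, and if $w$ had many neighbors the pairs could not all find private neighbors; (3) case-split on the structure around $w$ and on whether $G$ contains a triangle. When $G$ is triangle-free, push toward $G$ being a subdivided star (the degree-two vertices of $K_{1,t}^*$ are exactly the subdivision vertices, and the pattern of forced private neighbors essentially dictates a central high-degree vertex with pendant paths of length two); the small cases force $C_5$ (and $C_3$ appears only when triangles are allowed). When $G$ has a triangle, argue that every triangle must be attached at the same central vertex (any triangle not through the center would create excess structure inconsistent with only one vertex outside $P$), yielding the family $K_{1,t}^{*\Delta}$; the degenerate cases $t=0$ or small give $C_3$. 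I expect the main obstacle to be the bookkeeping in step (3): ruling out all the "near-miss" configurations — graphs that locally look like a subdivided star with a triangle but have an extra edge, an extra leaf, or a longer pendant path — and showing each such deviation either lowers $\Gamma_{pr}$ below $n-1$ or is impossible because $P$ would fail to be minimal or fail to have a perfect matching. Careful use of the private-neighbor accounting, together with the constraint $|V\setminus P|=1$, should close these cases, but it is the part that requires the most case analysis.
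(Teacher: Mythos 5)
First, a point of context: the paper does not prove this lemma at all --- it is quoted from \cite{implication} as a known result --- so there is no in-paper proof to compare against, and your proposal has to stand on its own. As it stands it does not: the forward direction, which you yourself identify as ``the substantive part,'' is only a plan. Steps (2) and (3) --- bounding the structure around the unique vertex $w$ outside $P$ and then forcing $G$ into $\{C_3, C_5, K_{1,t}^{*\Delta}\}$ while ruling out all near-miss configurations --- are exactly the content of the theorem, and they are not carried out. Moreover, your preliminary ``minimality claim'' is stated incorrectly: from the minimality of $P$ and a matched pair $\{x,y\}$ you cannot conclude that $x$ or $y$ has a private neighbor, because a vertex outside $P$ whose neighbors in $P$ are exactly $x$ and $y$ already blocks the deletion of the pair without being a private neighbor of either endpoint. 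The correct tool is the joint external private neighborhood $epn(x,y;P)$ together with the hypotheses that $P\setminus\{x,y\}$ still dominates $x$ and $y$, i.e.\ precisely Lemmas \ref{pdsminimality1} and \ref{pdsminimality2}; your private-neighbor accounting in step (3) would have to be redone with this weaker (joint) conclusion.

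There is also a concrete error in the ``routine'' backward direction. For $K_{1,t}^{*\Delta}$ with $\Delta\geq 1$, the set $P=V(G)\setminus\{\ell\}$ obtained by deleting one leaf of the subdivided star (and matching its support vertex to the center $c$) is \emph{not} a minimal PDS: for any attached triangle $\{c,x_j,y_j\}$, the set $P\setminus\{x_j,y_j\}$ is still a paired dominating set, since $x_j$ and $y_j$ remain dominated by $c$, every leaf remains dominated by its support, and the matching survives after removing the edge $x_jy_j$. Note that minimality depends only on the vertex set, so no clever re-matching rescues this witness, and checking that ``each matched pair has a private neighbor'' is in any case not a sufficient criterion for minimality. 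The correct witness of size $n-1$ is $V(G)\setminus\{c\}$, i.e.\ delete the central vertex and match each subdivision vertex with its leaf and each triangle pair $x_j,y_j$ with each other; minimality then holds because removing any matched pair leaves a vertex with no neighbor in the remaining set, and maximality follows from Lemma \ref{Gammaprupbound}. So both directions need repair: the backward direction needs the corrected witness, and the forward direction needs the actual structural analysis rather than an outline.
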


The following two lemmas give necessary conditions for the minimality of a paired dominating set. Notice that the notation $epn(u, v; S)$ which is used by ~\cite{updomclawfree} is defined as follows:\\
$\forall u, v \in S$, $epn(u, v; S)= \{w\in N(u)\cup N(v)\setminus S$ $|$ $ N(w)\cap S\subseteq \{u,v\}\}$\\
In other words, for a vertex $w\in epn(u, v; S)$ it holds that either $w\in epn(u,S)$, or $w\in epn(v,S)$, or $w$ is adjacent to both $u$ and $v$ and no other vertex in $S\setminus\{u,v\}$.
  
\begin{lemma}\label{pdsminimality1}
(\cite{updomclawfree}) Let S be a minimal PDS in a connected graph $G$ of order at least 3 and let $\{u, v\} \subset S$ and $S'= S\setminus \{u, v\}$. If $S'$ dominates both $u$ and $v$ and $G[S']$ contains a perfect matching, then $|epn(u, v; S)| \geq 1$.
\end{lemma}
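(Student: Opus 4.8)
The plan is to prove the statement by contradiction: assume $epn(u, v; S)=\emptyset$ and, using the two hypotheses (that $S'$ dominates both $u$ and $v$, and that $G[S']$ contains a perfect matching), show that $S'$ is itself a PDS. Since $\{u,v\}\subset S$ we have $S'\subsetneq S$, so this contradicts the minimality of $S$ as a PDS and the lemma follows.

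The perfect-matching requirement for $S'$ is handed to us for free, so the whole argument reduces to verifying that $S'$ is a dominating set of $G$. I would split $V(G)$ into three parts. Vertices of $S'$ are dominated trivially (they lie in the set), and $u$ and $v$ are dominated by $S'$ by the first hypothesis. The remaining case is a vertex $x\in V(G)\setminus S$: since $S$ dominates $G$, $x$ has a neighbour in $S$; if that neighbour can be chosen in $S'=S\setminus\{u,v\}$ we are done, and otherwise $\emptyset\neq N(x)\cap S\subseteq\{u,v\}$. The latter forces $x\in N(u)\cup N(v)$ while $x\notin S$, so $x\in epn(u, v; S)$ directly from the definition, contradicting $epn(u, v; S)=\emptyset$. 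Hence $S'$ dominates every vertex of $G$, so $S'$ is a PDS properly contained in $S$, contradicting minimality.

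There is essentially no hard step here; the only point requiring care is making the case analysis on $x\notin S$ genuinely exhaustive — in particular, noting that a vertex adjacent to both $u$ and $v$ and to no other vertex of $S$ is already captured by the displayed definition of $epn(u, v; S)$, so that no separate subcase is needed. The hypotheses that $G$ is connected and has order at least $3$ are not really used in the argument beyond ensuring that $S'$ is non-empty and that the statement is non-vacuous.
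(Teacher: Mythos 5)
Your argument is correct and is the standard minimality argument: assuming $epn(u,v;S)=\emptyset$, every vertex of $V(G)\setminus S$ keeps a neighbour in $S'$, so $S'$ is a dominating set with a perfect matching in $G[S']$, i.e.\ a PDS properly contained in $S$, contradicting minimality. The paper itself states this lemma only as a cited result (from the reference given) without reproducing a proof, and your reasoning matches the intended proof of that source, including the correct observation that connectivity and order at least $3$ play no essential role beyond non-degeneracy.
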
\par
  
\begin{lemma}\label{pdsminimality2}
(\cite{updomclawfree}) Let $S$ be a minimal PDS in a connected graph $G$ of order at least 3 and let $M$ be a perfect matching in $G[S]$. If $uv\in M$ and both $u$ and $v$ have degree at least 2 in $G[S]$, then $|epn(u, v; S)| \geq 1$.
\end{lemma}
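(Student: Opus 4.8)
The plan is to obtain the statement as an immediate consequence of Lemma \ref{pdsminimality1}. First I would put $S' = S \setminus \{u,v\}$ and verify the two hypotheses of that lemma for the pair $\{u,v\}$. Because $uv \in M$, the vertex $v$ is the $M$-partner of $u$, so $v \in N(u) \cap S$; since $u$ has degree at least $2$ in $G[S]$, it then has a neighbour $u' \in S$ with $u' \neq v$, and symmetrically $v$ has a neighbour $v' \in S$ with $v' \neq u$. As $u', v' \notin \{u,v\}$, both lie in $S'$, so $S'$ dominates $u$ (via $u'$) and $v$ (via $v'$). Next, $M \setminus \{uv\}$ consists of pairwise non-adjacent edges that avoid $u$ and $v$ and that saturate precisely the vertices of $S \setminus \{u,v\} = S'$; hence it is a perfect matching of $G[S']$. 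With both hypotheses in place, Lemma \ref{pdsminimality1} yields $|epn(u,v;S)| \geq 1$.

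If instead one wants a proof that does not route through Lemma \ref{pdsminimality1}, I would argue by contradiction, in the same spirit as that lemma's proof. Assume $epn(u,v;S) = \emptyset$ and show $S'$ is already a PDS, contradicting minimality of $S$. Indeed $G[S']$ has the perfect matching $M \setminus \{uv\}$ as above; every vertex of $S'$ dominates itself; $u$ and $v$ are dominated by $u'$ and $v'$; and any vertex $w \notin S$ that $S'$ fails to dominate must lie in $N(u) \cup N(v)$ with $N(w) \cap S \subseteq \{u,v\}$, i.e. $w \in epn(u,v;S)$, contradicting the assumption. Hence $S'$ is a dominating set of $G$ whose induced subgraph has a perfect matching, so $S'$ is a PDS properly contained in $S$ --- impossible.

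I do not expect a real obstacle here, since the assertion is essentially a specialization of Lemma \ref{pdsminimality1}. The only point needing a little care is the bookkeeping around deleting the matched pair $\{u,v\}$: one must check that $G[S']$ still carries a perfect matching (namely $M \setminus \{uv\}$) and that $u$ and $v$ are still dominated from within $S'$, and it is precisely the hypothesis that $u$ and $v$ have degree at least $2$ in $G[S]$ that guarantees the latter.
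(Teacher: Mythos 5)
Your proposal is correct. Note that the paper does not prove this statement at all: both Lemma \ref{pdsminimality1} and Lemma \ref{pdsminimality2} are quoted from \cite{updomclawfree} without proof, so there is no in-paper argument to compare against. Your first route is the natural one and is complete: since $uv\in M$, the matching $M\setminus\{uv\}$ has all its edges inside $S'=S\setminus\{u,v\}$ and saturates exactly $S'$, so $G[S']$ has a perfect matching; and the degree-at-least-$2$ hypothesis in $G[S]$ supplies neighbours $u',v'\in S'$ of $u$ and $v$ respectively, so $S'$ dominates both. These are precisely the hypotheses of Lemma \ref{pdsminimality1}, which then gives $|epn(u,v;S)|\geq 1$. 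Your second, direct argument is also sound: if $epn(u,v;S)=\emptyset$, then any vertex outside $S$ not dominated by $S'$ would have all its $S$-neighbours in $\{u,v\}$ and hence lie in $epn(u,v;S)$, so $S'$ would be a PDS properly contained in $S$, contradicting minimality (and the degree hypothesis rules out $S=\{u,v\}$, so $S'\neq\emptyset$). Both routes are valid; the reduction to Lemma \ref{pdsminimality1} is the cleaner of the two.
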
\par

The result in Lemma \ref{relation} is useful in determining the relationship between the upper domination number and the upper paired domination number. 

\begin{lemma}\label{relation}
(~\cite{Gammapralgo}) Every minimal paired dominating set $P$ in $G$ contains a minimal dominating set $S$ such that $|S|\geq |P|/2$. 
\end{lemma}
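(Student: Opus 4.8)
The plan is to prove that every minimal paired dominating set $P$ of a graph $G$ contains a minimal dominating set $S$ with $|S| \ge |P|/2$. Since $P$ admits a perfect matching $M$, write $P = \{u_1, v_1, u_2, v_2, \dots, u_k, v_k\}$ where $u_iv_i \in M$ for each $i$, so $|P| = 2k$. The natural candidate for $S$ is a set that picks at least one vertex from each matched pair $\{u_i, v_i\}$ in such a way that $S$ dominates $G$, and then to argue that $S$ (after possibly shrinking it) remains a dominating set of size at least $k = |P|/2$. The first step is therefore to construct a dominating set $S_0 \subseteq P$ that contains at least one endpoint of each edge of $M$; the second step is to extract from $S_0$ a \emph{minimal} dominating set $S$ without losing too many vertices.

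For the construction, I would process the pairs one at a time. Start with $S_0 = P$ and, for each edge $u_iv_i$, try to discard one of its two endpoints while keeping $S_0$ dominating. The key observation is that because $M$ is a matching, the two endpoints of a given pair are matched only to each other, so the pairs interact with one another only through domination, not through the matching structure; this lets us handle the "at least one per pair" requirement greedily. I would argue that at the end of this process the resulting set $S_0$ still dominates $G$, contains at least one vertex of each of the $k$ pairs (hence $|S_0| \ge k$), and is such that \emph{every} vertex of $S_0$ is needed — i.e., $S_0$ is already minimal, or can be further reduced. The cleanest route is: let $S$ be any \emph{minimal} dominating set of $G$ with $S \subseteq P$ (one exists because $P$ itself is a dominating set, so some subset of $P$ is a minimal dominating set). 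The claim then reduces to showing $|S| \ge k$, i.e., that $S$ must contain at least one endpoint of each matching edge $u_iv_i$.

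Here is where the minimality of $P$ as a \emph{paired} dominating set enters, and this is the step I expect to be the main obstacle. Suppose for contradiction that for some edge $u_iv_i \in M$ we have $u_i, v_i \notin S$. Since $S$ is a dominating set contained in $P$, the set $S$ together with removing the pair $\{u_i, v_i\}$ from $P$ — informally, $P \setminus \{u_i, v_i\} \supseteq S$ — is still a dominating set of $G$ (it contains the dominating set $S$). Moreover $P \setminus \{u_i, v_i\}$ inherits the perfect matching $M \setminus \{u_iv_i\}$ on its vertices, so $G[P \setminus \{u_i, v_i\}]$ contains a perfect matching. Thus $P \setminus \{u_i, v_i\}$ is a paired dominating set of $G$ strictly smaller than $P$, contradicting the minimality of $P$. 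Therefore $S$ meets every edge of $M$, giving $|S| \ge k = |P|/2$, which is exactly the statement. The delicate point to verify carefully is that $P \setminus \{u_i, v_i\}$ is genuinely paired-dominating — domination follows from $S \subseteq P \setminus \{u_i, v_i\}$, and the matching condition follows from deleting a full matching edge rather than a single vertex — and that this does not require $G$ to be connected or isolate-free beyond what is already assumed when $\Gamma_{pr}$ is defined; handling these edge cases (pun intended) is the only real subtlety.
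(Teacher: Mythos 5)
Your proposal is correct: taking any minimal dominating set $S\subseteq P$ and showing it must meet every edge of the matching $M$ (since otherwise $P\setminus\{u_i,v_i\}$ would be a smaller paired dominating set, contradicting minimality of $P$) is exactly the standard argument for this result. The paper itself states this lemma as a cited result of Henning and Pradhan without reproducing a proof, and your argument matches the original one in approach; the preliminary greedy discussion in your first paragraph is superfluous but harmless.
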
\par

Here we state the relationship between the upper domination number and the upper paired domination number in Corollary \ref{Gpr2G}, which is an immediate result of Lemma \ref{relation}.
\begin{corollary}\label{Gpr2G}
(~\cite{Gammapralgo}) For any graph $G$ without isolated vertices, $\Gamma_{pr}(G)\leq 2\Gamma(G)$.
\end{corollary}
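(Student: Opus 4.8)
The plan is to read the bound straight off Lemma \ref{relation}. The only preliminary remark is that the hypothesis ``$G$ has no isolated vertices'' is there precisely so that both parameters are well defined: a graph with an isolated vertex admits no dominating set whose induced subgraph has a perfect matching, whereas every graph without isolated vertices does admit at least one paired dominating set (for instance, take a maximal matching $M$ together with, for every vertex missed by $M$, one neighbour and that neighbour's $M$-partner), so $\Gamma_{pr}(G)$ exists; and $\Gamma(G)$ always exists since $V(G)$ contains a minimal dominating set.

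Given that, I would let $P$ be a $\Gamma_{pr}$-set of $G$, that is, a minimal paired dominating set with $|P| = \Gamma_{pr}(G)$. Applying Lemma \ref{relation} to $P$ produces a minimal dominating set $S \subseteq P$ with $|S| \ge |P|/2$. Since $\Gamma(G)$ is by definition the maximum cardinality of a minimal dominating set of $G$, we get $\Gamma(G) \ge |S| \ge |P|/2 = \Gamma_{pr}(G)/2$, and multiplying through by $2$ yields $\Gamma_{pr}(G) \le 2\Gamma(G)$, as claimed.

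I expect no genuine obstacle here: all the real work is already contained in Lemma \ref{relation}, whose proof extracts a minimal dominating set from a minimal PDS by deleting matched partners. The only points needing the slightest care are the trivial well-definedness issue noted above and getting the direction of the inequality $|S| \ge |P|/2$ right. It is worth recording for the rest of the paper that this bound is sharp — the remaining sections are devoted to characterizing when $\Gamma_{pr}(G) = 2\Gamma(G)$ — so the constant $2$ cannot be improved in general.
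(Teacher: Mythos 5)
Your proof is correct and follows exactly the route the paper intends: the corollary is obtained by applying Lemma~\ref{relation} to a $\Gamma_{pr}$-set $P$, extracting a minimal dominating set $S\subseteq P$ with $|S|\ge |P|/2$, and comparing with $\Gamma(G)$. The added remark on well-definedness of $\Gamma_{pr}(G)$ for graphs without isolated vertices is a harmless (and accurate) bonus not spelled out in the paper.
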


In the remainder of this paper, we will investigate the properties of the graphs satisfying $\Gamma_{pr}(G)= 2\Gamma(G)$.

\section{Graphs with the property $\Gamma_{pr}(G)= 2\Gamma(G)$}\label{Gpr==2G}
The following result for the graphs with the property $\Gamma_{pr}(G)= 2\Gamma(G)$ is obtained from Lemma \ref{relation}.

\begin{figure}[t]
\centering
\includegraphics[width=0.35\textwidth]{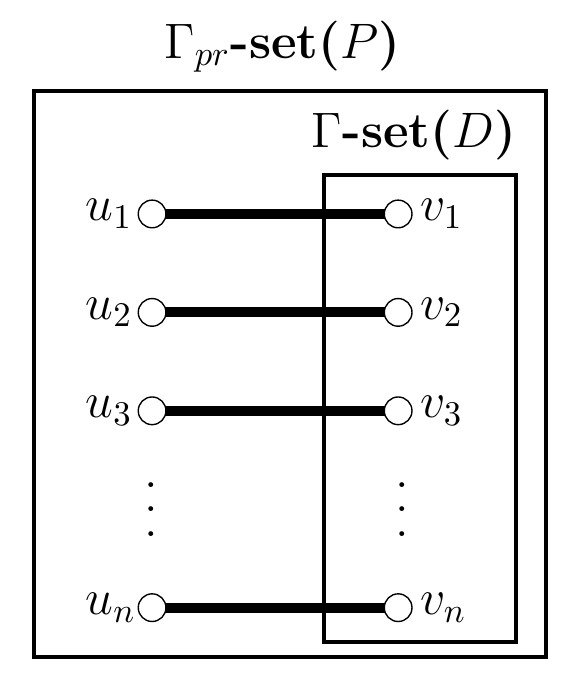}
\caption{A $\Gamma_{pr}$-set $P$ in a graph with the property $\Gamma_{pr}(G)= 2\Gamma(G)$ }
\label{gammas}
\end{figure}

\begin{lemma}\label{independentGamma}
Let $G$ be a graph with the property $\Gamma_{pr}(G)= 2\Gamma(G)$. Then every $\Gamma_{pr}$-set of $G$ contains an independent $\Gamma$-set (see Figure \ref{gammas}). 
\end{lemma}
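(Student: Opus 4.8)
The plan is to descend from the paired dominating set to a dominating set via Lemma~\ref{relation}, and then to rectify it into an independent set using the minimality of $P$ together with the extremal equality. Let $P$ be a $\Gamma_{pr}$-set, so that $|P|=\Gamma_{pr}(G)=2\Gamma(G)$, and fix a perfect matching $M$ of $G[P]$, so $|M|=\Gamma(G)$. By Lemma~\ref{relation}, $P$ contains a minimal dominating set $S$ with $|S|\ge |P|/2=\Gamma(G)$; since a minimal dominating set of $G$ has at most $\Gamma(G)$ vertices, $|S|=\Gamma(G)=|P|/2$. Thus $S$ is a $\Gamma$-set contained in $P$, and $R:=P\setminus S$ satisfies $|R|=|S|$. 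Recalling that an independent dominating set of $G$ is exactly a maximal independent set of $G$, hence a minimal dominating set of $G$, it suffices to exhibit an independent set $I\subseteq P$ with $|I|=\Gamma(G)$ that dominates $G$. Observe also that if $w\in V(G)\setminus P$ has $N(w)\cap P=\{t\}$ for a single vertex $t$, then $t$ belongs to every dominating subset of $P$; call such a vertex $t$ \emph{forced}, and note that the set $F$ of forced vertices is contained in $S$.

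The next step is to analyse how $M$ meets the partition $\{S,R\}$ of $P$. A count of matched vertices, using $|S|=|R|$, shows that $M$ has exactly as many edges inside $S$ as inside $R$. I would rule both kinds out --- so that $M$ becomes a perfect matching between $S$ and $R$ --- by exploiting the minimality of $P$ as a PDS (Lemmas~\ref{pdsminimality1} and~\ref{pdsminimality2}) together with the equality $|P|=2\Gamma(G)$: an edge $uv\in M$ with $u,v\in R$ has both endpoints dominated by $S$ and leaves $G[P\setminus\{u,v\}]$ matchable, so the minimality of $P$ forces this pair to keep a private neighbour outside $P$, and substituting such private neighbours for $u,v$ while adjusting $S$ yields a minimal dominating set with more than $\Gamma(G)$ vertices, contradicting the definition of $\Gamma(G)$. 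Once $M$ matches $S$ to $R$, each edge of $M$ has at most one endpoint in $S$ and hence at most one forced endpoint; a further, more delicate argument of the same flavour shows that $F$ is in fact an independent set --- this is precisely where $\Gamma_{pr}(G)=2\Gamma(G)$ is indispensable, since if two forced vertices were adjacent then no dominating subset of $P$ could be independent at all.

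Finally I would build $I$ by selecting exactly one endpoint of each edge of $M$; any such selection has cardinality $|M|=\Gamma(G)$ and dominates $P$, since on every matching edge the discarded endpoint is adjacent to the retained one. Two further conditions must be met: (i) for every $w\in V(G)\setminus P$ the retained set must meet $N(w)\cap P$ (so that $I$ dominates $V(G)\setminus P$ as well), which in particular forces every vertex of $F$ to be retained; and (ii) the retained set must be independent. I would start from the selection realised by $S$ itself, which already satisfies (i) because $S$ dominates $G$ and, by the previous step, picks one endpoint of each edge of $M$. I would then repair (ii): whenever two retained vertices are adjacent, the structure established above guarantees that at least one of them lies outside $F$, so the selection can be flipped on its matching edge. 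The heart of the argument --- and the step I expect to be the main obstacle --- is to show that these repairs can be performed so as to preserve (i), terminate, and eliminate all adjacencies simultaneously; this is where the hypothesis $\Gamma_{pr}(G)=2\Gamma(G)$ does the real work, since an obstruction to such a repair would, upon tracing the private neighbours involved, produce a minimal dominating set of $G$ with more than $\Gamma(G)$ vertices, which is impossible. Once such an $I$ is obtained, it is an independent dominating set of $G$ of size $\Gamma(G)$ contained in $P$, i.e.\ an independent $\Gamma$-set, as required.
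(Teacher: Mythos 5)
There is a genuine gap: your argument is a plan whose decisive steps are left unproved, and you say so yourself. Two claims carry all the weight and neither is established. First, you assert that ``a further, more delicate argument of the same flavour'' shows the forced set $F$ is independent, but the justification you offer (``if two forced vertices were adjacent then no dominating subset of $P$ could be independent at all'') only explains why you \emph{need} the claim, not why it holds. Second, and more seriously, the entire construction of $I$ rests on the flip--repair procedure along matching edges, and you explicitly flag its correctness (preserving domination of $V(G)\setminus P$, termination, and removal of all adjacencies) as ``the main obstacle,'' resolving it only with the hope that an obstruction ``would, upon tracing the private neighbours involved, produce a minimal dominating set with more than $\Gamma(G)$ vertices.'' No such trace is exhibited, so the proof is not complete. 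A smaller symptom of the same vagueness: to exclude an $M$-edge $uv$ with $u,v\in R$ you invoke Lemma~\ref{pdsminimality1} and then ``substitute private neighbours while adjusting $S$,'' but no substitution is needed --- a vertex $w\in epn(u,v;P)$ has $N(w)\cap S=\emptyset$ and $w\notin S$, contradicting the fact that $S$ dominates $G$ outright.

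The missing idea is that adjacency inside the $\Gamma$-set obtained from Lemma~\ref{relation} contradicts the \emph{minimality of $P$ as a paired dominating set}, not (directly) the maximality of $\Gamma(G)$, and this makes all of your repair machinery unnecessary. The paper's proof takes the minimal dominating set $D\subseteq P$ with $|D|\ge |P|/2=\Gamma(G)$ given by Lemma~\ref{relation} and shows this very set is independent: if $v_1,v_2\in D$ were adjacent, with $M$-partners $u_1,u_2$, then $P'=P\setminus\{u_1,u_2\}$ still contains $D$ (hence dominates $G$) and $M'=M\setminus\{u_1v_1,u_2v_2\}\cup\{v_1v_2\}$ is a perfect matching of $G[P']$, so $P'$ is a strictly smaller paired dominating set inside $P$, contradicting the minimality of $P$. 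In your own setup, once you know $M$ matches $S$ to $R$ (which your counting step essentially gives), this one-line exchange applied to two adjacent vertices of $S$ finishes the proof; instead you abandon $S$ and try to re-select endpoints of $M$, which is where your argument stalls. You need to either carry out the repair/termination argument in full or replace it by this exchange step.
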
\par
\begin{proof}
Let $P$ be a $\Gamma_{pr}$-set of $G$ with a perfect matching $M$. By Lemma \ref{relation}, $P$ contains a minimal dominating set $D$ such that $|D|\geq\Gamma_{pr}/2$. Since $\Gamma_{pr}(G)= 2\Gamma(G)$, it follows that $|D|\geq \Gamma(G)$. Thus, $D$ is a $\Gamma$-set. Now it suffices to prove that $D$ is an independent set. Suppose to the contrary that $D$ has two adjacent vertices $v_1$ and $v_2$. Suppose further that $u_1v_1, u_2v_2 \in M$. Let $M'=M\setminus \{u_1v_1, u_2v_2\}\cup \{v_1v_2\}$ and $P'= P\setminus \{u_1, u_2\}$. Notice that $P'$ is a dominating set in $G$ since it contains $D$. Furthermore, it has a perfect matching $M'$; therefore, $P'$ is a paired dominating set, a contradiction to the minimality of $P$. Thus, $D$ is an independent set. 
\end{proof}

In the following, we first state our results for two special graph classes with the property $\Gamma_{pr}(G)= 2\Gamma(G)$, namely bipartite and unicyclic graphs. We then investigate special graph classes with $\Gamma_{pr}(G)= 2\Gamma(G)$ and restricted girth.

\subsection{Bipartite graphs with the property $\Gamma_{pr(G)}= 2\Gamma(G)$ }
In this section, we characterize bipartite graphs with $\Gamma_{pr}(G)= 2\Gamma(G)$. We state our obtained result for bipartite graphs with $\Gamma_{pr}(G)= 2\Gamma(G)$ in Theorem \ref{bipartiteGp=2G}.

\begin{theorem}\label{bipartiteGp=2G}
Let $G$ be a connected bipartite graph. Then $\Gamma_{pr}(G)= 2\Gamma(G)$ if and only if $G$ is isomorphic to $K_2$. 
\end{theorem}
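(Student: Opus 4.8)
The plan is to treat the two directions separately. The ``if'' direction is a one-line computation: for $K_2=\{a,b\}$ the only minimal dominating sets are $\{a\}$ and $\{b\}$, so $\Gamma(K_2)=1$, while $\{a,b\}$ is the unique paired dominating set and hence also the only minimal one, giving $\Gamma_{pr}(K_2)=2=2\Gamma(K_2)$.

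For the ``only if'' direction, suppose $G$ is connected and bipartite with $\Gamma_{pr}(G)=2\Gamma(G)$; since this presupposes the existence of a paired dominating set, $G$ has no isolated vertex, so $n:=|V(G)|\ge 2$. The first step is the general inequality $\Gamma(G)\ge\alpha(G)$: a maximum independent set is in particular a maximal independent set, hence (as recalled in Section~\ref{prelim}) a minimal dominating set, and so its size is at most $\Gamma(G)$. The second step uses bipartiteness: if $(X,Y)$ is a bipartition of $G$, then both $X$ and $Y$ are independent and the larger of them has size at least $\lceil n/2\rceil$, so $\alpha(G)\ge\lceil n/2\rceil$. Combining the two steps,
\[
\Gamma_{pr}(G)=2\Gamma(G)\ge 2\alpha(G)\ge n .
\]
On the other hand any paired dominating set is a subset of $V(G)$, so $\Gamma_{pr}(G)\le n$. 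Hence $\Gamma_{pr}(G)=n$, and Lemma~\ref{Gammapr=n} forces $G\cong mK_2$; since $G$ is connected this means $m=1$, i.e.\ $G\cong K_2$, as desired.

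There is no genuinely hard step here: the argument is just the bound $\alpha(G)\ge n/2$ for bipartite $G$, the bound $\Gamma(G)\ge\alpha(G)$, the trivial bound $\Gamma_{pr}(G)\le n$, and the already available characterization of equality in Lemma~\ref{Gammapr=n}. The only points needing a little care are making sure $\Gamma_{pr}(G)$ is well defined (ensured by connectedness together with $n\ge 2$) and noticing that, somewhat surprisingly, Lemma~\ref{independentGamma} is not actually needed for this particular characterization. A fully equivalent alternative to the bipartition bound would be to invoke K\H{o}nig's theorem, which for a bipartite graph $G$ gives $\alpha(G)=n-\nu(G)$ with $\nu(G)$ the matching number, and then use $\nu(G)\le n/2$; I would nonetheless prefer the self-contained bipartition argument.
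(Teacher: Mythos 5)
Your proof is correct and follows essentially the same route as the paper: bound $\Gamma(G)\geq n/2$ via the larger side of the bipartition (the paper passes directly from the partite class to a minimal dominating set, you go through $\alpha(G)$ and maximal independent sets, which is the same idea), deduce $\Gamma_{pr}(G)=n$, and invoke Lemma~\ref{Gammapr=n} together with connectedness. The extra care you take about well-definedness and the explicit $\Gamma(G)\geq\alpha(G)$ step are fine but do not change the argument.
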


\begin{proof}
Let $G$ be a connected bipartite graph with $\Gamma_{pr}(G)= 2\Gamma(G)$ and order $n$. Note that $G$ has at least one partite of size at least $n/2$, which implies a minimal dominating set of size at least $n/2$. Hence $\Gamma(G)\geq n/2$. Then we have $\Gamma_{pr}(G)\geq n$ which implies that $\Gamma_{pr}(G)=n$. By Lemma \ref{Gammapr=n}, $G$ is isomorphic to $mK_2$. Since $G$ is connected, it is isomorphic to $K_2$. \par
For the converse direction, it is easy to see that $\Gamma (K_2)=1$ and $\Gamma_{pr} (K_2)=2$ and hence $\Gamma_{pr} (K_2)= 2\Gamma (K_2)$.
\end{proof}
\par

\subsection{Unicyclic graphs with the property $\Gamma_{pr}(G)= 2\Gamma(G)$ }
The aim of this section is to describe unicyclic graphs with the property $\Gamma_{pr}(G)= 2\Gamma(G)$. Before stating our result on unicyclic graphs with the property $\Gamma_{pr}(G)= 2\Gamma(G)$ in Theorem \ref{unicycGp=2G}, we mention the following lemma which establishes lower bounds for upper domination number in unicyclic graphs:

\begin{lemma}\label{Gammauni}
Let $G$ be a connected unicyclic graph of order $n$. Then the following hold:
\begin{itemize}
	\item For even $n$, $\Gamma(G) \geq n/2$ 
	\item For odd $n$, $\Gamma(G) \geq (n-1)/2$ 
\end{itemize}

\end{lemma}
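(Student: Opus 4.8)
The plan is to reduce the statement to a lower bound on the independence number $\alpha(G)$. As recalled in Section~\ref{prelim}, a maximum independent set of $G$ is in particular a maximal independent set, hence a minimal dominating set, so $\Gamma(G)\ge\alpha(G)$; it therefore suffices to bound $\alpha(G)$ from below by the quantities in the statement.

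Here is the argument I would give. Let $C$ be the unique cycle of $G$ and choose any vertex $v\in V(C)$. Since $C$ is the only cycle of $G$ and it passes through $v$, the graph $G-v$ contains no cycle at all; that is, $G-v$ is a (possibly disconnected) forest on $n-1$ vertices, and in particular it is bipartite. Consequently $G-v$ has an independent set of size at least $\lceil (n-1)/2\rceil$ (take a largest colour class of a suitable proper $2$-colouring of $G-v$), and since $G-v$ is an induced subgraph of $G$, this set is also independent in $G$. Hence $\alpha(G)\ge\lceil (n-1)/2\rceil$. A parity check then finishes the proof: if $n$ is even then $n-1$ is odd and $\lceil (n-1)/2\rceil=n/2$, while if $n$ is odd then $\lceil (n-1)/2\rceil=(n-1)/2$, which are precisely the two bounds claimed.

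I do not anticipate a genuine obstacle. The points that deserve a word of care are that deleting a vertex of the unique cycle really leaves a forest (any cycle of $G-v$ would be a cycle of $G$ avoiding $v$, contradicting the uniqueness of $C$), the routine fact that a bipartite graph on $m$ vertices has an independent set of size at least $\lceil m/2\rceil$, and matching the value of the ceiling to the parity of $n$. One could alternatively split into the cases ``$C$ even'' (then $G$ itself is bipartite, giving the stronger $\alpha(G)\ge\lceil n/2\rceil$) and ``$C$ odd'', but the vertex-deletion argument handles both uniformly and is the cleaner write-up.
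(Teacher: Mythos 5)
Your argument is correct and follows essentially the same strategy as the paper: break the unique cycle so that a bipartite graph remains, take a large colour class as an independent set of $G$, and pass from independence to $\Gamma(G)$ via the fact that a maximal independent set is a minimal dominating set. The only real difference is cosmetic---the paper deletes an edge $xy$ of the cycle and then distinguishes cases according to which partite classes $x$ and $y$ fall into, whereas your deletion of a cycle vertex yields the bound $\alpha(G)\geq\lceil (n-1)/2\rceil$ in one stroke and sidesteps that case analysis.
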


\begin{proof}
Let $G$ be a connected unicyclic graph of order $n$. Note that $G$ has a single cycle, say $C$. Let $x$ and $y$ be two adjacent vertices of $G$ on $C$. Let further $G'$ be a graph obtained by removing the edge between $x$ and $y$; that is, $V(G')=V(G)$ and $E(G')=E(G)- \{xy\}$. Since $G'$ has no cycles, it is a tree and consequently a bipartite graph. If $n$ is even, then $G'$ has either two partites of size $n/2$ or at least one partite, say $A'$ of size at least $n/2$+1. In the former case, at least one of the partites of size $n/2$ in $G'$ is also an independent set in $G$ and hence $\Gamma(G) \geq n/2$. In the latter case, one possibility is that $x$ and $y$ reside in different partites, in which case $A'$ is also an independent set in $G$ of size at least $n/2$+1. However, the other possibility is that $x$ and $y$ reside in the same partite $A'$, in which case $A'-\{x\}$ is an independent set of size at least $n/2$ in $G$. Both possibilities imply that $\Gamma(G) \geq n/2$ for even $n$.\par
On the other hand, if $n$ is odd, then $G'$ has a partite, say $A'$, of size at least $(n+1)/2$. Here two possibilities exist. One is that $x$ and $y$ are in different partites, in which case $A'$ is also an independent set in $G$ and thus, $\Gamma(G)\geq (n+1)/2$. The other possibility is that $x$ and $y$ reside in $A'$, in which case $A'-\{x\}$ is an independent set of size at least $(n+1)/2-1=(n-1)/2$ in $G$ and hence, $\Gamma(G) \geq (n-1)/2$. Both possibilities yield $\Gamma(G) \geq (n-1)/2$ for odd $n$.
\end{proof}

Now we are ready to state the main result of this section in Theorem \ref{unicycGp=2G}.
\begin{theorem}\label{unicycGp=2G}
Let $G$ be a connected unicyclic graph. Then $\Gamma_{pr}(G)= 2\Gamma(G)$ if and only if $G \in \{C_3, C_5, K_{1,t}^{*1}\}$. 
\end{theorem}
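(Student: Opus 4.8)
The plan is to obtain the classification by pairing the lower bounds for $\Gamma$ on unicyclic graphs (Lemma~\ref{Gammauni}) with Ulatowski's descriptions of the graphs attaining $\Gamma_{pr}(G)=n$ and $\Gamma_{pr}(G)=n-1$ (Lemmas~\ref{Gammapr=n} and~\ref{Gammapr=n-1}), and then to check the ``if'' direction family by family. For the forward direction, let $G$ be connected and unicyclic with $\Gamma_{pr}(G)=2\Gamma(G)$ and order $n\ge 3$. If $n$ were even, Lemma~\ref{Gammauni} would give $\Gamma(G)\ge n/2$, hence $\Gamma_{pr}(G)=2\Gamma(G)\ge n$ and so $\Gamma_{pr}(G)=n$; then Lemma~\ref{Gammapr=n} together with connectedness would force $G\cong K_2$, which has no cycle -- a contradiction. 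Thus $n$ is odd, Lemma~\ref{Gammauni} gives $\Gamma(G)\ge (n-1)/2$, so $\Gamma_{pr}(G)\ge n-1$, and with Lemma~\ref{Gammaprupbound} this yields $\Gamma_{pr}(G)=n-1$. By Lemma~\ref{Gammapr=n-1}, $G\in\{C_3,C_5,K_{1,t}^{*\Delta}\}$; since a graph in the family $K_{1,t}^{*\Delta}$ has exactly $\Delta$ cycles (the $\Delta$ attached triangles, the rest of the graph being acyclic) and $G$ is unicyclic, we get $\Delta=1$, i.e. $G\in\{C_3,C_5,K_{1,t}^{*1}\}$.

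For the converse, $C_3$ is settled by $\Gamma(C_3)=1$, $\Gamma_{pr}(C_3)=2$ and $C_5$ by $\Gamma(C_5)=2$, $\Gamma_{pr}(C_5)=4$ (the $\Gamma_{pr}$ values are instances of Lemma~\ref{Gammapr=n-1}, and the $\Gamma$ values are immediate small-case checks). For $H=K_{1,t}^{*1}$, let $c$ be the central vertex, let $s_1,\ldots,s_t$ be the subdivision vertices with $cs_i\in E(H)$, let $\ell_1,\ldots,\ell_t$ be the leaves with $s_i\ell_i\in E(H)$, and let $a,b$ be the two triangle vertices, so that $ca,cb,ab\in E(H)$ and $|V(H)|=2t+3$. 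Since $H$ belongs to the family $K_{1,t}^{*\Delta}$ with $\Delta=1$, Lemma~\ref{Gammapr=n-1} gives $\Gamma_{pr}(H)=2t+2$, and Corollary~\ref{Gpr2G} then yields $\Gamma(H)\ge t+1$. Hence it suffices to prove $\Gamma(H)\le t+1$, from which $\Gamma_{pr}(H)=2t+2=2\Gamma(H)$ follows. (If a direct lower bound is preferred, $\{s_1,\ldots,s_t,a\}$ is a minimal dominating set of size $t+1$: each $\ell_i$ is a private neighbour of $s_i$, and $a$ is isolated in the set.)

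The step I expect to require the most care is this upper bound $\Gamma(H)\le t+1$, i.e. the claim that every minimal dominating set $D$ of $H$ satisfies $|D|\le t+1$; the tool is the minimality criterion that each $v\in D$ is either isolated in $D$ or has a vertex in $epn(v,D)$. For each $i$, dominating $\ell_i$ requires $s_i\in D$ or $\ell_i\in D$, but not both: if $s_i,\ell_i\in D$ then $\ell_i$ is not isolated in $D$, while $epn(\ell_i,D)=\emptyset$ because $\ell_i$'s only neighbour $s_i$ already lies in $D$, so $D\setminus\{\ell_i\}$ would still dominate $H$ -- contradicting minimality. Hence $|D\cap\{s_i,\ell_i\}|=1$ for every $i$, accounting for exactly $t$ vertices of $D$. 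A similar analysis shows $|D\cap\{a,b,c\}|\le 1$: assuming two of $a,b,c$ lie in $D$, one of the triangle vertices $a,b$ that lies in $D$ is then non-isolated in $D$ yet has empty external private neighbourhood -- its neighbours either already lie in $D$ or are dominated by at least two vertices of $D$ -- again contradicting minimality; and since $a$ (hence also $b$) must be dominated, in fact $|D\cap\{a,b,c\}|=1$. Summing the two counts gives $|D|=t+1$, so $\Gamma(H)=t+1$ and the equality $\Gamma_{pr}(H)=2\Gamma(H)$ is established, completing the characterization.
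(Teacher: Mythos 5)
Your proposal is correct and follows essentially the same route as the paper: split on the parity of $n$, apply Lemma~\ref{Gammauni} together with Lemmas~\ref{Gammapr=n} and~\ref{Gammapr=n-1}, and observe that $K_{1,t}^{*1}$ is the only unicyclic member of the family $K_{1,t}^{*\Delta}$. The only difference is cosmetic: you justify $\Gamma_{pr}(G)=n-1$ via Lemma~\ref{Gammaprupbound} rather than the parity of $\Gamma_{pr}$, and in the converse you prove the values $\Gamma(K_{1,t}^{*1})=t+1$ and $\Gamma_{pr}(K_{1,t}^{*1})=2t+2$ (correctly) where the paper simply asserts them.
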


\begin{proof}   
Let $G$ be a connected unicyclic graph with $\Gamma_{pr}(G)= 2\Gamma(G)$ and order $n$. In the case of even $n$, by Lemma \ref{Gammauni}, we have $\Gamma (G)\geq n/2$, which yields $\Gamma_{pr}(G)=n$. Then, by Lemma \ref{Gammapr=n}, $G$ is isomorphic to $mK_2$ which is not unicyclic, contradiction. Thus, the case of even $n$ does not lead to a unicyclic graph with $\Gamma_{pr}(G)= 2\Gamma(G)$. However, for odd $n$, it follows from Lemma \ref{Gammauni} that $\Gamma(G)\geq (n-1)/2$. This, in turn, leads to $\Gamma_{pr}(G)\geq (n-1)$. Since $n$ is odd we have $\Gamma_{pr}(G)=(n-1)$. Then, it follows from Lemma \ref{Gammapr=n-1} that $G$ is isomorphic to $\{C_3, C_5, K_{1,t}^{*\Delta}\}$. Obviously, $C_3$ and $C_5$ are unicyclic graphs and $K_{1,t}^{*1}$ (for $\Delta=1$) is the only unicyclic graph in the family $K_{1,t}^{*\Delta}$. Therefore, $G \in \{C_3, C_5, K_{1,t}^{*1}\}$.\par 
For the converse direction, we show that if $G \in \{C_3, C_5, K_{1,t}^{*1}\}$, then $\Gamma_{pr}(G)= 2\Gamma(G)$. For the case of $C_3$ and $C_5$, it is easy to verify that $\Gamma(C_3)= 1$, $\Gamma_{pr}(C_3)= 2$ and $\Gamma(C_5)= 2$, $\Gamma_{pr}(C_5)= 4$. Furthermore, $\Gamma(K_{1,t}^{*1})= t+1$ and $\Gamma_{pr}(K_{1,t}^{*1})= 2(t+1)$.

\end{proof}


\subsection{Graphs with $\Gamma_{pr}(G)= 2\Gamma(G)$ and restricted girth}
In this section, we address the problem of describing graphs with $\Gamma_{pr}(G)= 2\Gamma(G)$ from a girth point of view. We begin with the case of graphs with $\Gamma_{pr}(G)= 2\Gamma(G)$ and girth at least 6.\par
We first give some definitions and notations that we frequently use in the forthcoming proofs. Let $P$ be any $\Gamma_{pr}$-set of a graph $G$ with  $\Gamma_{pr}(G)= 2\Gamma(G)$. By $G[P]$, we refer to the subgraph induced by the set $P$. Furthermore, if a vertex in $P$ is only adjacent to a single vertex in $P$, we name it a \textit{leaf} in $G[P]$.  
\begin{theorem}\label{girth6}
Let $G$ be a graph of girth at least 6. Then $\Gamma_{pr}(G)= 2\Gamma(G)$ if and only if $G$ is isomorphic to $mK_2$ (for $m\geq 1)$. 
\end{theorem}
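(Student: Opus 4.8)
The plan is to prove both implications, the converse being immediate and the forward direction the substance. The first step is to reduce to the connected case. Both $\Gamma$ and $\Gamma_{pr}$ are additive over connected components (a minimal dominating set, resp.\ a minimal PDS, of $G$ is a disjoint union of such sets over the components), and by Corollary \ref{Gpr2G} the inequality $\Gamma_{pr}\le 2\Gamma$ holds on every component; hence $\Gamma_{pr}(G)=2\Gamma(G)$ iff equality holds on each component. Since a component of a graph of girth at least $6$ again has girth at least $6$ (being acyclic counts as girth $\infty$), and $mK_2$ is exactly the disjoint union of copies of $K_2$, it suffices to prove that a connected graph $G$ of girth at least $6$ satisfies $\Gamma_{pr}(G)=2\Gamma(G)$ if and only if $G\cong K_2$. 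The ``if'' part is the computation $\Gamma(K_2)=1$, $\Gamma_{pr}(K_2)=2$. For the ``only if'' part I assume $G$ is connected of girth at least $6$, $\Gamma_{pr}(G)=2\Gamma(G)$, and $G\not\cong K_2$; since $G$ has no isolated vertex this forces $n=|V(G)|\ge 3$, and I aim for a contradiction.

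Fix a $\Gamma_{pr}$-set $P$ with a perfect matching $M$ of $G[P]$. By Lemma \ref{independentGamma}, $P$ contains an independent $\Gamma$-set $D$; put $U=P\setminus D$, so $|D|=|U|=\Gamma(G)=:k$, and since $D$ is independent every edge of $M$ has exactly one endpoint in $D$, so we may write $M=\{u_iv_i:1\le i\le k\}$ with $v_i\in D$ and $u_i\in U$. Note $D$ is then a maximum independent set: a maximum independent set is a minimal dominating set, so $\alpha(G)\le\Gamma(G)=|D|\le\alpha(G)$. I will repeatedly use: (i) girth $\ge 6$ forbids $C_3,C_4,C_5$, so neighborhoods are independent and any two vertices have at most one common neighbor; (ii) by minimality of $P$, whenever I delete a pair $\{x,y\}$ from $P$ such that $P\setminus\{x,y\}$ still induces a perfect matching, the resulting proper subset fails to be a PDS, hence fails to be dominating, so there is a vertex $z$ with $\emptyset\ne N[z]\cap P\subseteq\{x,y\}$; Lemmas \ref{pdsminimality1} and \ref{pdsminimality2} give the analogous private‑external‑neighbor statements for other removable pairs.

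The core claim is that $G[P]$ is exactly $k$ disjoint copies of $K_2$, equivalently that $M$ is an induced matching of $G$ and every vertex of $P$ is a leaf in $G[P]$. An edge of $G[P]$ outside $M$ is, by the labeling and since $D$ is independent, of the form $u_iu_j$ or $v_iu_j$ with $i\ne j$; in either case $u_iv_i$, that edge, and $u_jv_j$ form a four‑vertex path in $G[P]$ which by (i) is chordless (any chord would create a $C_3$ or $C_4$). I then delete a pair that can be re‑matched around this path — for an edge $v_iu_j$ delete $\{u_i,v_j\}$ and re‑match $v_i$ to $u_j$; for an edge $u_iu_j$ delete $\{v_i,v_j\}$ and re‑match $u_i$ to $u_j$ — obtaining a proper subset of $P$ that still admits a perfect matching. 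Either this subset is dominating, contradicting minimality of $P$ outright, or by (ii) there is a vertex $z\notin P$ whose only neighbors in $P$ lie among the two deleted vertices; if $z$ is adjacent to both of them, then $z$ together with the chordless path closes up into a $C_5$, contradicting girth $\ge 6$. The residual situation — $z$ adjacent to exactly one deleted vertex — is the step I expect to be the main obstacle; I would attack it either by iterating the same swap, or, more cleanly, by choosing the triple $(P,M,D)$ extremally at the outset (for instance so that $|E(G[P])|$ is as small as possible among all such triples), so that the PDS produced by the swap, having fewer induced edges, yields the contradiction.

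Finally, once $G[P]=kK_2$, I show $P=V(G)$: a vertex $w\in V(G)\setminus P$ is dominated by $D$, say $w\sim v_1$, and since $v_1$'s only neighbor in $P$ is $u_1$, replacing $u_1$ by $w$ (and $u_1v_1$ by $wv_1$ in $M$), or otherwise merging $w$ into the $kK_2$ structure, produces via (i)--(ii) either a $\Gamma_{pr}$-set with strictly fewer induced edges, or a minimal PDS of size exceeding $2k=\Gamma_{pr}(G)$, or a forbidden short cycle — in each case a contradiction. Hence $V(G)\setminus P=\emptyset$, so $\Gamma_{pr}(G)=|P|=n$; by Lemma \ref{Gammapr=n} this gives $G\cong mK_2$, and connectedness forces $G\cong K_2$, contradicting $G\not\cong K_2$ and completing the proof. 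The only place where I anticipate real work is the residual single‑endpoint case in the core claim, where the delicate point is to confirm that girth $6$ is just strong enough to close every relevant short walk into a forbidden cycle or, failing that, to produce a strictly ``better'' $\Gamma_{pr}$-set.
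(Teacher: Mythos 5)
Your overall architecture (reduce to components, show the matching $M$ is induced in $G[P]$, then show $P=V(G)$ and invoke Lemma \ref{Gammapr=n}) is reasonable, and the easy parts (converse, reduction, $z$ adjacent to both deleted vertices forcing a $C_5$) are fine. But the place you flag as ``the main obstacle'' --- the external private neighbor $z$ adjacent to exactly one of the two deleted vertices --- is precisely where the entire content of the theorem lies, and your two fallback ideas do not close it. Choosing $(P,M,D)$ to minimize $|E(G[P])|$ gives no leverage: the swap produces a set of size $\Gamma_{pr}(G)-2$, which is either a PDS (contradicting minimality of $P$ outright, as you note) or not a PDS at all; in neither case do you obtain a new $\Gamma_{pr}$-set with fewer induced edges to compare against the extremal choice. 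Iterating the swap likewise has no progress measure. So the core claim is left unproved. The paper resolves exactly this situation by a global counting construction rather than a local swap: with the non-matching edge $a_1a_2$ and the private neighbor $y$ of $\{b_1,b_2\}$ (your $z$), it classifies every other matched pair $(a_i,b_i)$ according to whether $b_ia_1\in E(G)$ and whether $a_i$ is a leaf in $G[P]$, uses Lemma \ref{pdsminimality2} in the remaining case to extract an external private neighbor $c_i$ adjacent only to $b_i$, and then checks that $\{y,a_1\}\cup A'\cup B'\cup C$ is an independent set of size $\Gamma(G)+1$ --- the girth-$6$ hypothesis enters exactly here, since $A'\cup C\cup\{y\}\subset N_2(a_1)$ and any adjacency inside $N_2(a_1)$ would create a cycle of length at most $5$. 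That contradiction with $\alpha(G)=\Gamma(G)$ is the missing idea in your proposal.

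A secondary problem: your last step (from $G[P]=kK_2$ to $P=V(G)$) is also only sketched, and as written it cannot work --- replacing $u_1$ by $w$ does not change $|P|$, and a $\Gamma_{pr}$-set with ``strictly fewer induced edges'' than $kK_2$ cannot exist since $k$ edges is the minimum for a set admitting a perfect matching. This step, however, is easy to repair with the paper's argument: a vertex $x\notin P$ is adjacent to at most one endpoint of each matched pair (triangle-freeness), so picking from each pair an endpoint not adjacent to $x$ yields, together with $x$, an independent set of size $\Gamma(G)+1$, again contradicting $\alpha(G)=\Gamma(G)$. Note finally that the paper's logic differs from yours in shape: it does not prove $G[P]=kK_2$ as an unconditional lemma, but assumes $P\ne V(G)$ and shows that both $G[P]=kK_2$ and $G[P]\ne kK_2$ are impossible, which is why the independent-set counting, not a matching swap, is the engine of the proof.
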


\begin{proof}
Let $G$ be a graph of girth at least 6 with $\Gamma_{pr}(G)= 2\Gamma(G)$. If $\Gamma_{pr}(G)=n$, by Lemma \ref{Gammapr=n}, $G$ is isomorphic to $mK_2$ (for $m\geq 1$) and we are done.\par
We will now show that the case $\Gamma_{pr}(G)\leq (n-1)$ does not lead to a graph with $\Gamma_{pr}(G)= 2\Gamma(G)$ and complete the proof. Suppose that $\Gamma_{pr}(G)\leq (n-1)$. By Lemma \ref{independentGamma}, $G$ has a $\Gamma_{pr}$-set $P$, which has  an independent $\Gamma$-set $B$ inside it. We further define set $A=P\setminus B$ as the set of partners of the vertices in $B$. Let $A=\{a_i\}$ and $B=\{b_i\}$ for $1\leq i\leq \Gamma(G)$. Note that $P$ has a perfect matching including pairs of matched vertices $(a_i,b_i)$ for $a_i \in A$, $b_i\in B$, and $1\leq i\leq \Gamma(G)$. Since $\Gamma_{pr}(G)\leq (n-1)$, it implies that there exists at least one vertex $x$ in $V(G)\setminus P$. \par
We first show that $G[P]\neq mK_2$ where $m=\Gamma(G)$. Suppose to the contrary that $G[P]=\Gamma(G) K_2$. Note that the vertex $x$ is adjacent to at most one vertex of each pair of matched vertices $(a_i,b_i)$ since the girth is at least 6. Let $Z$ be a set including one vertex from each pair of vertices $(a_i,b_i)$ in $P$ which is not adjacent to $x$. Since $G[P]=\Gamma(G) K_2$, the set $Z$ is an independent set. Thus, $Z\cup x$ forms an independent set of size $\Gamma(G)+1$, a contradiction to $B$ being a $\Gamma$-set. Therefore, $G[P]\neq mK_2$ and without loss of generality, we assume that there exist at least two pairs of matched vertices, say $(a_1,b_1)$ and $(a_2,b_2)$, which have two adjacent endpoints, say $a_1a_2\in E(G)$. Now by Lemma \ref{pdsminimality1}, it holds that $|epn(b_1, b_2; P)| \geq 1$. Let $y$ be a vertex in $epn(b_1, b_2; P)$. Due to the girth restriction, $y$ is not adjacent to both of $b_1$ and $b_2$. Thus, $y$ is adjacent to exactly one of $b_1$ and $b_2$, say $b_1$. Notice that for each pair of matched vertices $(a_i,b_i)$ for $2\leq i\leq \Gamma(G)$, one of the following three cases holds: \\
\textbf{Case 1}: $b_ia_1\notin E(G)$.\\
\textbf{Case 2}: $b_ia_1\in E(G)$ and $a_i$ is a leaf in $G[P]$.\\
\textbf{Case 3}: $b_ia_1\in E(G)$ and $a_i$ is not a leaf in $G[P]$.\\
Note that in Case 3 a vertex $b_i$ for $2\leq i\leq \Gamma(G)$ has a neighbor $a_1$ which is different from its partner $a_i$. Hence $b_i$ has degree at least two in $G[P]$. Besides, the partner of $b_i$, namely $a_i$, has degree at least two in $G[P]$ since it is not a leaf in $G[P]$. Therefore, it follows by Lemma \ref{pdsminimality2} that $|epn(a_i, b_i; P)| \geq 1$. This in turn implies that there exists at least one vertex $c_i$ in $V(G)\setminus P$ which is a private neighbor of $a_i$ and $b_i$. Due to girth at least 6 restriction, $c_i$ is adjacent to exactly one of $a_i$ and $b_i$. Since $b_i$ is a vertex in the $\Gamma$-set $B$, $c_i$ is only adjacent to $b_i$ (see Figure \ref{girth6ABC}).\par 

Now let us define the three sets $A'$, $B'$, and $C$ as follows: for each pair of matched vertices $(a_i,b_i)$ for $2\leq i\leq \Gamma(G)$ if Case 1 holds, then put $b_i$ in $B'$; if Case 2 holds, then put $a_i$ in $A'$, and if Case 3 holds, then put $c_i$ in $C$. It is easy to see that $B'$ is an independent set since $B' \subseteq  B$. Furthermore, $I=A'\cup C\cup \{y\}$ is an independent set since $I\subset N_2(a_1)$ and the girth of $G$ is at least 6. The vertices in $A'$ are leaves in $G[P]$; that is, they are only adjacent to their partners $b_i$ in $B\setminus B'$. Thus, no vertex in $A'$ is adjacent to a vertex in $B'$. Besides, the vertices in $C$ are private neighbors which are only adjacent to a vertex $b_i$ in $B\setminus B'$. Hence, no vertex in $C$ is adjacent to a vertex in $B'$. By definition no vertex in $B'$ is adjacent to $a_1$ and the vertex $y$ is adjacent only to $b_1$. Hence we have that $\{y,a_1\}\cup A'\cup B'\cup C$ is an independent set. Note that the sets $A'$, $B'$, and $C$ are mutually disjoint sets since for each pair of matched vertices $(a_i,b_i)$ for $2\leq i\leq \Gamma(G)$, exactly one of the three aforementioned cases holds. Thus,  $|A'\cup B'\cup C|=\Gamma(G)-1$. Hence, the set $\{y,a_1\}\cup A'\cup B'\cup C$ is an independent set of size $\Gamma(G)+1$, a contradiction to $B$ being a $\Gamma$-set. Therefore, there exists no graph $G$ with $\Gamma_{pr}(G)\leq (n-1)$, $\Gamma_{pr}(G)= 2\Gamma(G)$, and girth at least 6. Hence, $G$ is isomorphic to $mK_2$ (for $m\geq 1$) and we are done.\par
For the converse direction, it can easily be verified that $\Gamma (mK_2)$= $m$ and $\Gamma_{pr} (mK_2)$=$2m$. Therefore, $\Gamma_{pr} (mK_2)$=$\Gamma (mK_2)$.
\end{proof}

\begin{figure}[t]
\centering
\includegraphics[width=0.6\textwidth]{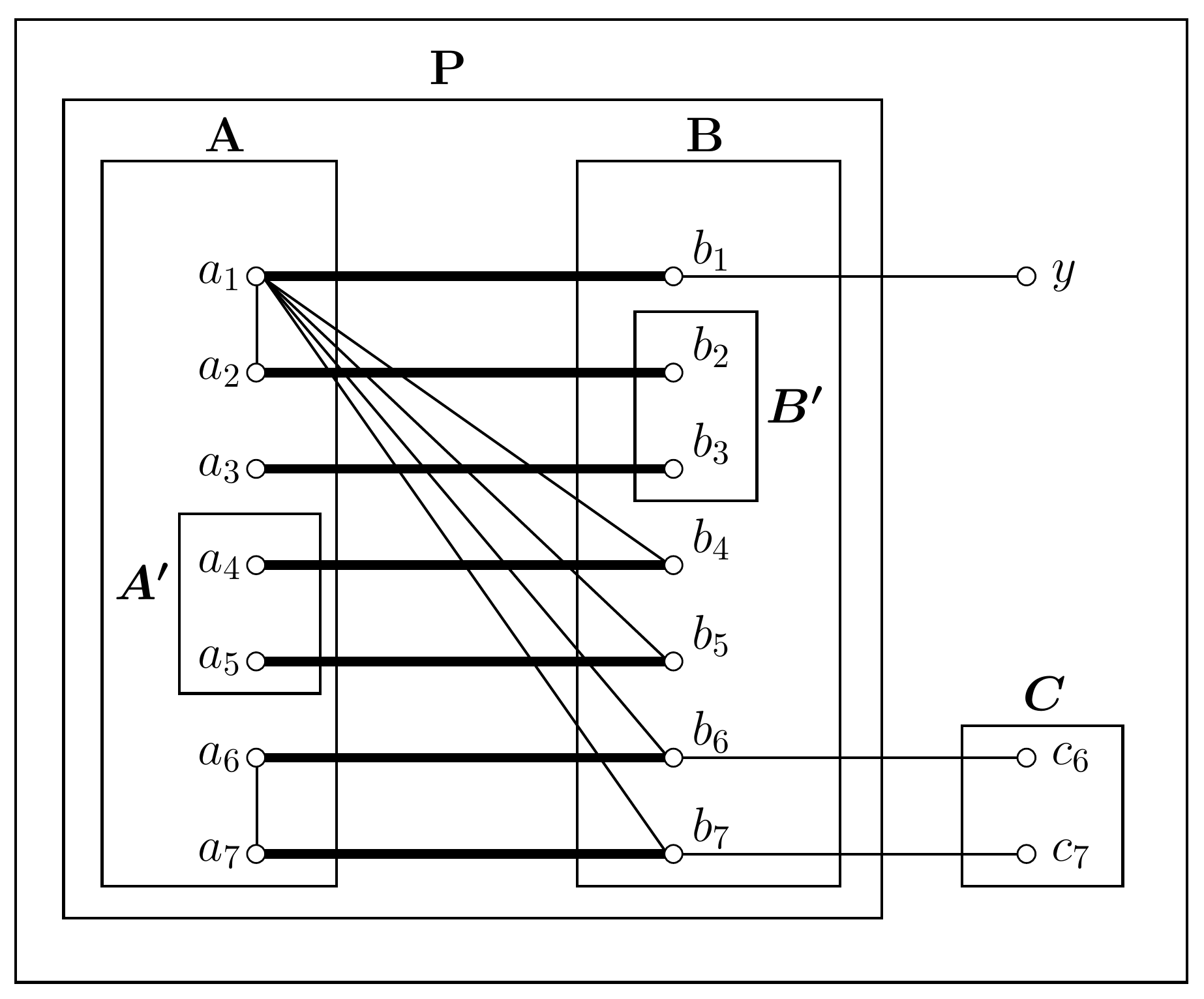}
\caption{The sets $A', B'$, and $C$}
\label{girth6ABC}
\end{figure}


In what follows, we proceed to the graphs with $\Gamma_{pr}(G)= 2\Gamma(G)$ and girth smaller than 6. We focus on graphs with $\Gamma_{pr}(G)= 2\Gamma(G)$ and girth at least 4 and provide a characterization for a special family of graphs with the mentioned properties, that is, $C_3$-free cactus graphs with $\Gamma_{pr}(G)= 2\Gamma(G)$.

From here onward, we assume that $G$ is a $C_3$-free cactus graph with $\Gamma_{pr}(G)= 2\Gamma(G)$. Furthermore, recall that by Lemma \ref{independentGamma}, $G$ has a $\Gamma_{pr}$-set $P$ with an independent $\Gamma$-set inside it. Let further $P=A\cup B$, where $B$ is an independent $\Gamma$-set and $A$ is the set of partners of the vertices in $B$. Let $A=\{a_i\}$ and $B=\{b_i\}$ for $1\leq i\leq \Gamma(G)$. Note that $P$ has a perfect matching including pairs of matched vertices $(a_i,b_i)$ for $a_i \in A$, $b_i\in B$, and $1\leq i\leq \Gamma(G)$. We now continue with presenting a number of lemmas which provide essential tools for the characterization of $C_3$-free cactus graphs with $\Gamma_{pr}(G)= 2\Gamma(G)$. \par

\begin{lemma}\label{goldrule}
Let $G$ be $C_3$-free cactus graph with $\Gamma_{pr}(G)= 2\Gamma(G)$. Let further $P$ be any $\Gamma_{pr}$-set of $G$. Then, for $1\leq i\leq \Gamma(G)$, at least one vertex of each pair of matched vertices $(a_i,b_i)$ is a leaf in $G[P]$.
\end{lemma}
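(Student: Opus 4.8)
The plan is to argue by contradiction. Write $P = A \cup B$, where $B$ is an independent $\Gamma$-set contained in $P$ (which exists by Lemma \ref{independentGamma}) and $A = P \setminus B$ is the set of partners; since $B$ is independent, the perfect matching $M$ of $G[P]$ runs between $A$ and $B$, giving the pairs $(a_i,b_i)$ with $a_i \in A$, $b_i \in B$. Suppose some pair, say $(a_1,b_1)$, has neither endpoint a leaf in $G[P]$, i.e. $\deg_{G[P]}(a_1) \ge 2$ and $\deg_{G[P]}(b_1) \ge 2$. The first observation is that $P_0 := P \setminus \{a_1,b_1\}$ cannot be a dominating set of $G$: indeed $M \setminus \{a_1 b_1\}$ is a perfect matching of $G[P_0]$, so if $P_0$ were dominating it would be a PDS properly contained in $P$, contradicting the minimality of the $\Gamma_{pr}$-set $P$. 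Let $Z \ne \emptyset$ be the set of vertices not dominated by $P_0$. If some $z \in Z$ lay in $\{a_1,b_1\}$, then $z$ would have no neighbour in $P_0$ and hence degree at most $1$ in $G[P]$, a contradiction; so every $z \in Z$ satisfies $z \notin P$ and $N(z) \cap P \subseteq \{a_1,b_1\}$, and since $P$ dominates $G$, $z$ has a neighbour in $\{a_1,b_1\}$ — exactly one, because $a_1 b_1 \in E(G)$ and $G$ is $C_3$-free. (This is also what Lemma \ref{pdsminimality2}, or Lemma \ref{pdsminimality1}, yields for the pair $(a_1,b_1)$ from another angle.)

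Next I would eliminate all but one configuration. If some $z \in Z$ is adjacent to $a_1$, then $N(z) \cap P = \{a_1\} \subseteq A$, so $z$ has no neighbour in $B$ and $B \cup \{z\}$ is an independent set of size $\Gamma(G)+1$; extending it to a maximal independent set (which is a minimal dominating set) contradicts $|B| = \Gamma(G)$. Hence every $z \in Z$ has $N(z) \cap P = \{b_1\}$. If $Z$ contained two distinct vertices $z, z'$, then either $z \sim z'$, giving the triangle $\{z,z',b_1\}$, or $z \not\sim z'$, in which case $(B \setminus \{b_1\}) \cup \{z,z'\}$ is independent of size $\Gamma(G)+1$ — both impossible; so $Z = \{z\}$ with $N(z) \cap P = \{b_1\}$. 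Finally, if $b_1$ were the only neighbour of $a_1$ in $B$, then $(B \setminus \{b_1\}) \cup \{z,a_1\}$ would be independent (its two new vertices are nonadjacent to each other, since $z$'s only $P$-neighbour is $b_1$, and nonadjacent to $B \setminus \{b_1\}$) of size $\Gamma(G)+1$, a contradiction.

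It therefore remains to treat the case in which $a_1$ has a neighbour in $B$ other than $b_1$. Here I would start from the partners of the neighbours of $a_1$ in $B$, and close this family up (adding, for each partner brought in, its own neighbours in $B$ and their partners, etc.), obtaining a set $\mathcal{A} \subseteq A$ and a corresponding set $\mathcal{B} \subseteq B$ of equal size with $a_1 \in \mathcal{A}$, $b_1 \in \mathcal{B}$, and such that no vertex of $\mathcal{A}$ has a neighbour in $B \setminus \mathcal{B}$. The candidate for a too-large independent set is $T = (B \setminus \mathcal{B}) \cup \mathcal{A} \cup \{z\}$, which has size $|B| + 1 = \Gamma(G) + 1$; the vertex $z$ is nonadjacent to the rest of $T$ (its only $P$-neighbour lies in $\mathcal{B}$), and $\mathcal{A}$ is nonadjacent to $B \setminus \mathcal{B}$ by construction, while $C_3$-freeness rules out edges within $\mathcal{A}$ between vertices sharing a neighbour in $\mathcal{B}$. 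The genuinely delicate point — and the step I expect to be the main obstacle — is to show that $\mathcal{A}$ is in fact an independent set (equivalently, that no exchange of a few vertices is needed to repair $T$): an edge inside $\mathcal{A}$ joining two partners that are "far apart" in the closure, together with the chains of $\mathcal{A}$–$\mathcal{B}$ edges linking them back through $a_1$, must be shown to force a subgraph violating the cactus property (some edge lying on two distinct cycles), and carrying out this bookkeeping is where the cactus hypothesis is indispensable. Granting it, $T$ is an independent set of size $\Gamma(G)+1$, contradicting $|B| = \Gamma(G)$ and completing the proof.
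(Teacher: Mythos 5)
Your opening reductions are correct and essentially reproduce what the paper gets from Lemma \ref{pdsminimality2}: your undominated set $Z$ is exactly $epn(a_1,b_1;P)$, every $z\in Z$ must attach only to $b_1$ (else $B\cup\{z\}$ is too large an independent set), and $a_1$ must have a neighbour in $B\setminus\{b_1\}$. The problem is that the proof stops where the real difficulty begins. In your final case you only describe a closure construction producing $\mathcal{A}\subseteq A$ and $\mathcal{B}\subseteq B$ and then explicitly defer the decisive claim --- that $\mathcal{A}$ is independent, i.e.\ that an edge inside $\mathcal{A}$ forces an edge lying on two cycles --- writing ``granting it''. That claim is the whole content of the lemma in this case, and it is not clear your route can deliver it: an edge $a_ia_j$ inside $\mathcal{A}$, together with the $\mathcal{A}$--$\mathcal{B}$ chains linking back to $a_1$, only yields closed walks, and nothing in the construction forces two of the resulting cycles to share an edge; the cactus hypothesis forbids an edge on two cycles, but edge-disjoint cycles through distinct $B$-vertices are perfectly compatible with everything you have established at that point. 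So as written this is a genuine gap, not a finished argument.

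The paper derives its contradiction from material your sketch never introduces. It takes the set of \emph{all} bad pairs at once, extracts for each bad pair $(a_i,b_i)$ an external private neighbour $x_i\notin P$ adjacent only to $b_i$ (Lemma \ref{pdsminimality2}), shows that every bad $a_i$ has a second neighbour among the $b_j$'s of bad pairs (its Claim 1, the analogue of your last reduction), and then proves the key Claim 2: for each bad $a_i$, two vertices of $N_2(a_i)\cap X$ must be adjacent, since otherwise $\{a_i\}\cup(N_2(a_i)\cap X)\cup(B_Z\setminus N(a_i))\cup I_L$ is an independent set of size $\Gamma(G)+1$ (here $I_L$ uses one leaf from every good pair, which is why working with all bad pairs simultaneously matters). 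A counting argument --- $k$ bad $a_i$'s but only $k$ vertices in $X$ --- then forces two of these adjacent $X$-pairs to overlap, producing two $5$-cycles that share an edge, contradicting the cactus property. In other words, the cactus hypothesis is applied to adjacencies among the external private neighbours $x_i$, not to hypothetical edges inside an $A$-side set; without bringing in such private neighbours for every bad pair (not just for $(a_1,b_1)$), your final case has no comparable source of contradiction, and the missing independence of $\mathcal{A}$ remains unproved.
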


\begin{proof}
Suppose to the contrary that there exist $k$ pairs of matched vertices $(a_i,b_i)$ in $P$ such that both $a_i$ and $b_i$ have degree at least $2$ in $G[P]$ for $1\leq k\leq \Gamma(G)$. We first look at the case $k=1$, where there exists a single pair of matched vertices, say $(a_1,b_1)$, in $P$ such that both $a_1$ and $b_1$ have degree at least $2$ in $G[P]$. By Lemma \ref{pdsminimality2}, $|epn(a_1, b_1; P)| \geq 1$, which implies that $a_1$ and $b_1$ have a private neighbor $x_1$ in $V(G)\setminus P$. Since $G$ is a $C_3$-free graph, $x_1$ is not adjacent to both $a_1$ and $b_1$. Thus, $x_1$ is adjacent to exactly one of $a_1$ and $b_1$. Since $B$ is a $\Gamma$-set, $x_1$ is adjacent to $b_1$. We define $I_L$ as a set containing one leaf in $G[P]$ from each pair of matched vertices $(a_i,b_i)$ for $2\leq i\leq \Gamma(G)$ in. It is clear that $I_L$ is an independent set. Thus, $\{x_1, a_1\}\cup I_L$ is an independent set of size $\Gamma(G)+1$, which implies a minimal dominating set of size at least $\Gamma(G)+1$, a contradiction to $B$ being a $\Gamma$-set of $G$. Hence, we are done with the case $k=1$.\par
\begin{figure}[t]
\centering
\includegraphics[width=0.9\textwidth]{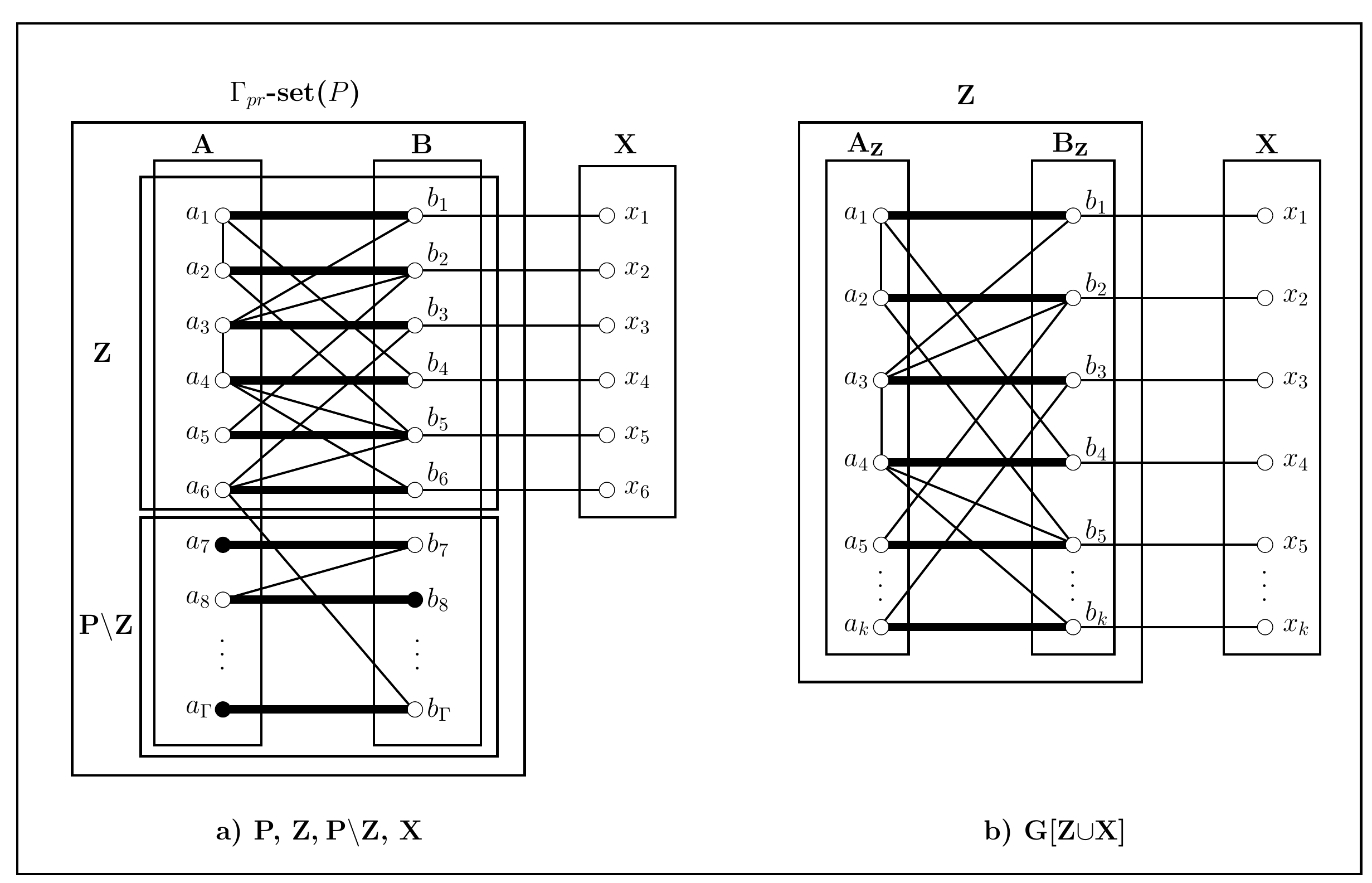}
\caption{The sets $P, Z, P\setminus Z$ and $X$ in $G$ and the subgraph $G[Z\cup X]$ }
\label{inducedZX}
\end{figure}
Then we proceed to the case with $k$ pairs of matched vertices $(a_i,b_i)$ in $P$ such that both $a_i$ and $b_i$ have degree at least $2$ in $G[P]$ for $2\leq k\leq \Gamma(G)$. Let $Z$ be the set containing pairs of matched vertices $(a_i,b_i)$ in $P$ such that both $a_i$ and $b_i$ have degree at least $2$ in $G[P]$. We further assume that $Z=A_z\cup B_z$ where $A_z\subseteq A$ and $B_z\subseteq B$ (see Figure \ref{inducedZX}). By Lemma \ref{pdsminimality2}, for each pair of $(a_i,b_i)$ in $Z$, we have that $|epn(a_i, b_i; P)| \geq 1$ for $1\leq i\leq k $. This implies that each pair of vertices $a_i$ and $b_i$ in $Z$ have a private neighbor $x_i$ in $V(G)\setminus P$. The vertex $x_i$ is not adjacent to both $a_i$ and $b_i$ since $G$ is a $C_3$-free graph. Thus, each $x_i$ is adjacent to exactly one of $a_i$ and $b_i$. Since $B$ is a $\Gamma$-set, $x_i$ is adjacent only to $b_i$. We define $X$ as a set containing $x_i$ for $1\leq i\leq k $ (see Figure \ref{inducedZX}). Notice that from each pair of matched vertices $(a_i,b_i)$ in $P\setminus Z$ at least one vertex is a leaf in $G[P]$. The leaves in $G[P]$ are shown with filled circles in Figure \ref{inducedZX}. Let $I_L$ be a set containing one vertex from each pair of matched vertices $(a_i,b_i)$ in $P\setminus Z$ which is a leaf in $G[P]$. Therefore, $|I_L|= \Gamma(G) - |Z|$. We continue with the following claims.\\
\textbf{Claim 1:} Each $a_i \in A_z$ has at least one neighbor in $B_z$ different from its partner $b_i$.\\
\textbf{Proof of Claim 1}: Suppose to the contrary that a vertex $a_i\in A_z$, say $a_1$, is adjacent only to its partner $b_1$ in $G[Z]$. Then $\{a_1, x_1\}\cup (B_z \setminus \{b_1\})\cup I_L$ is an independent set of size $2 + \Gamma(G) - 1= \Gamma(G)+1$, a contradiction to $B$ being a $\Gamma$-set.\qed\\
\begin{figure}[t]
\centering
\includegraphics[width=0.65\textwidth]{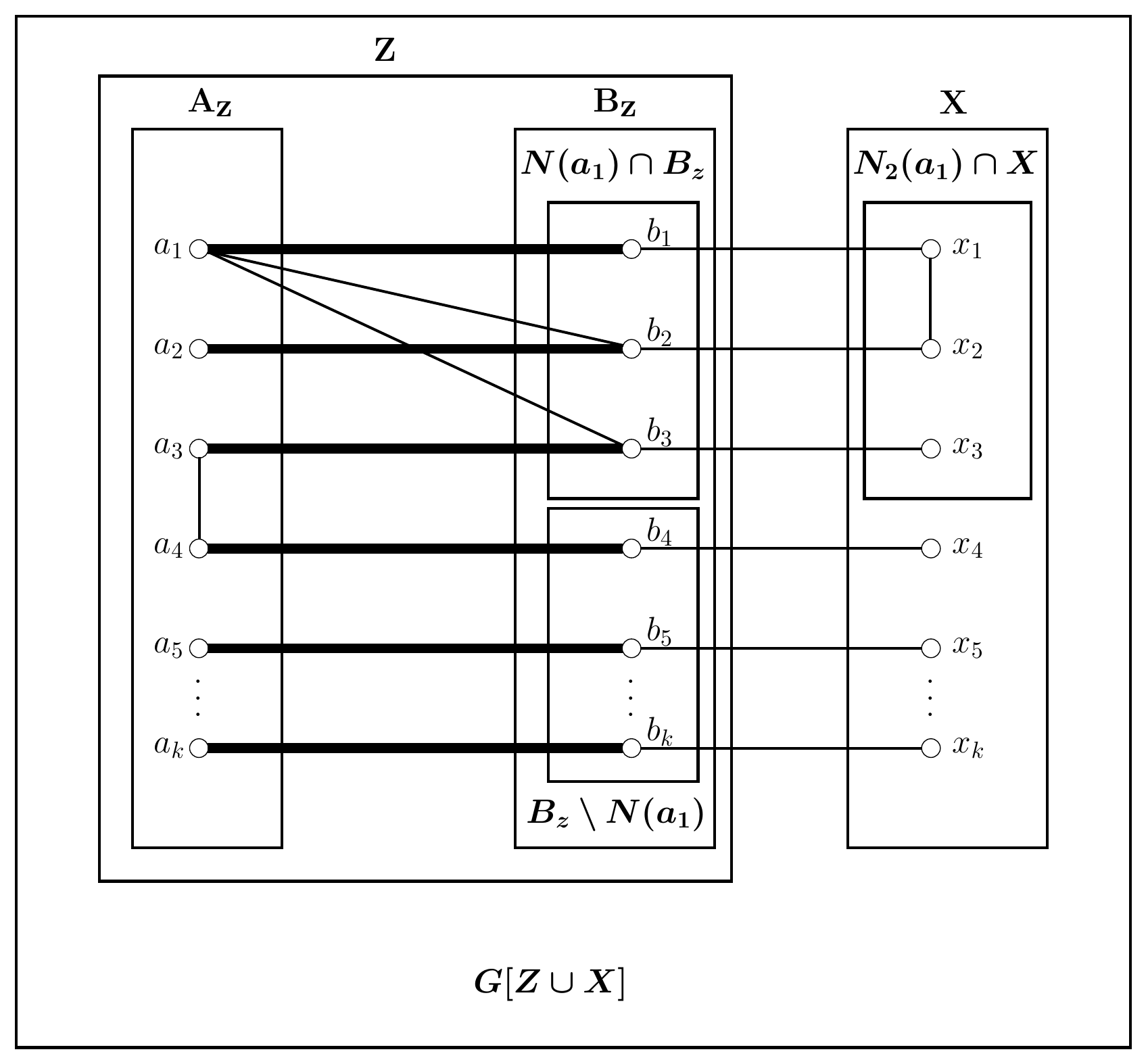}
\caption{The sets $N(a_1)\cap B_z$ and $N_2(a_1)\cap X$ in $G[Z\cup X]$ }
\label{CorNeigh}
\end{figure}

\textbf{Claim 2:} For any $a_i\in A_{Z}$, it holds that at least two vertices in $N_2(a_i)\cap X$ are adjacent.\\
\textbf{Proof of Claim 2:} Suppose to the contrary that there exists a vertex in $A_Z$, say $a_1$, such that $(N_2(a_1)\cap X)$ is an independent set (see Figure \ref{CorNeigh}). Then, $a_1\cup (N_2(a_1)\cap X)\cup (B_Z\setminus N(a_1))\cup I_L$ is an independent set of size $\Gamma(G) +1$, a contradiction to $B$ being a $\Gamma$-set. Thus, at least two vertices in $N_2(a_i)\cap X$ are adjacent.\qed \par
The argument in Claim 2 implies that for each vertex $ a_i \in A_Z$, there is at least one pair of adjacent vertices $(x_k, x_l)$ in $X$. Since $|A_Z|=k$, there must exist at least $k$ pairs of adjacent vertices in $X$. However, since $|X|=k$, there exist at most $k/2$ pairs with disjoint vertices in $X$. Therefore, there exist at least two vertices in $A_z$, say $a_1$ and $a_2$, whose corresponding pairs of adjacent vertices in $X$ are not disjoint; that is, these pairs have either one or two vertices in common. Recall that each vertex $x_i \in X$ is a private neighbor of a vertex $b_i \in B_z$; that is, each $x_i$ is adjacent to a single vertex $b_i$ in $B_z$. Now let $x_1$ and $x_2$ be the corresponding pair of adjacent vertices for $a_1$ in $X$. This implies that $b_2$ is also adjacent to $a_1$ and we have a 5-cycle $C_1=(a_1,b_1,x_1,x_2,b_2)$. Note that if $x_1$ and $x_2$ are also the corresponding pair of adjacent vertices for $a_2$, then $b_1$ is also adjacent to $a_2$ and we have a 5-cycle $C_2=(a_2,b_1,x_1,x_2,b_2)$. However, $C_1$ and $C_2$ are two cycles with a common edge $x_1x_2$, a contradiction to $G$ being a cactus graph. In the other case, if the corresponding pair of adjacent vertices for $a_2$ has only one vertex, say $x_2$, in common with that of $a_1$, then $x_2$ is adjacent to another vertex in $X$, say $x_3$. This in turn implies that $b_3$ is also adjacent to $a_2$ and we have a 5-cycle $C_3=(a_2,b_2,x_2,x_3,b_3)$. In this case, we have two cycles $C_1$ and $C_3$ with a common edge $b_2x_2$, a contradiction to $G$ being a cactus graph. Therefore, there are no pairs of matched vertices $(a_i,b_i)$ in $P$ such that both $a_i$ and $b_i$ have degree at least $2$ in $G[P]$ for $1\leq k\leq \Gamma(G)$.
\end{proof}

Lemma \ref{goldrule} implies that at least one vertex of each pair of matched vertices $(a_i,b_i)$ in $P$ is a leaf  in $G[P]$. We define the set $L_p$ as a set containing one leaf from each pair of matched vertices $(a_i,b_i)$ in $G[P]$ for $1\leq i\leq \Gamma(G)$. It is clear that $L_p$ is an independent set in $G[P]$ and $|L_p|=\Gamma(G)$. In the following lemmas, we obtain some other properties of $C_3$-free cactus graphs with $\Gamma_{pr}(G)= 2\Gamma(G)$. 

\begin{lemma}\label{privateneighbor}
Let $G$ be a $C_3$-free cactus graph with $\Gamma_{pr}(G)= 2\Gamma(G)$. Let further $P$ be any $\Gamma_{pr}$-set of $G$. If there exists a vertex $x$ in $V(G)\setminus P$, it has exactly two neighbors in $P$.
\end{lemma}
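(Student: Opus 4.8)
The plan is to prove the two inequalities $|N(x)\cap P|\ge 2$ and $|N(x)\cap P|\le 2$ separately; the lower bound $|N(x)\cap P|\ge 1$ is immediate since $P$ dominates $G$. The common engine for both directions is that every independent set of $G$ extends to a maximal independent set, which is a minimal dominating set, so an independent set of size $\Gamma(G)+1$ contradicts $B$ being a $\Gamma$-set. I will repeatedly use Lemma \ref{goldrule} (hence the set $L_p$ consisting of one leaf of $G[P]$ per matched pair) together with the following \emph{partner rule}: if $\ell$ is a leaf of $G[P]$ then its only neighbour in $P$ is its partner, so a vertex $w\in P$ can be adjacent to no leaf of $G[P]$ except its own partner (otherwise $w$ would be the partner of that leaf, but partners are unique).

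For $|N(x)\cap P|\ge 2$, suppose $N(x)\cap P=\{w\}$ and let $w'$ be the partner of $w$. By Lemma \ref{goldrule} at least one of $w,w'$ is a leaf of $G[P]$. If $w$ is not a leaf, then $w'$ is, and choosing $L_p$ so that $w'$ (not $w$) represents the pair $\{w,w'\}$ makes $L_p\cup\{x\}$ an independent set of size $\Gamma(G)+1$, a contradiction. If $w$ is a leaf, choose $L_p$ so that $w$ represents $\{w,w'\}$; by the partner rule $w'$ is adjacent to no vertex of $L_p\setminus\{w\}$, and $x$ is adjacent to neither $w'$ nor any vertex of $L_p\setminus\{w\}$, so $(L_p\setminus\{w\})\cup\{x,w'\}$ is an independent set of size $\Gamma(G)+1$, again a contradiction. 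Hence $|N(x)\cap P|\ge 2$.

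For $|N(x)\cap P|\le 2$, suppose $u_1,\dots,u_k$ ($k\ge 3$) are all the neighbours of $x$ in $P$. Since $G$ is $C_3$-free they are pairwise non-adjacent, hence lie in $k$ distinct matched pairs; write $\bar u_i$ for the partner of $u_i$, and note $\bar u_i\notin N(x)$ (else $x,u_i,\bar u_i$ would be a triangle). Call the pair of $u_i$ \emph{bad} if $u_i$ is a leaf of $G[P]$ and $\bar u_i$ is not, and \emph{good} otherwise; by Lemma \ref{goldrule}, in a good pair $\bar u_i$ is a leaf of $G[P]$. By the partner rule, for every $i$ the vertex $\bar u_i$ is adjacent to no leaf of $G[P]$ other than $u_i$, hence to no $\bar u_j$ coming from a good pair and to no $u_j$; so the only possible adjacencies among $\{\bar u_1,\dots,\bar u_k\}$ are between partners of two bad pairs. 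If $\{\bar u_1,\dots,\bar u_k\}$ is independent, then modifying a copy of $L_p$ by letting $\bar u_i$ represent the pair of $u_i$ for each $i\le k$ and adding $x$ yields an independent set of size $\Gamma(G)+1$ (reps of the remaining pairs are not adjacent to $x$ since $u_1,\dots,u_k$ are all of $N(x)\cap P$), a contradiction. So some two partners, say $\bar u_i\sim\bar u_j$ with both pairs bad, and then $C=(x,u_i,\bar u_i,\bar u_j,u_j)$ is a $5$-cycle. Here the cactus hypothesis must be used: $\bar u_i$ cannot also be adjacent to the partner of a third bad neighbour of $x$, for that would produce a second $5$-cycle through the edge $xu_i$; thus the adjacency relation on the partners of the bad neighbours of $x$ is a matching, and, combining this with the observation that $\bar u_i$ and $\bar u_j$ cannot both lie in the independent set $B$ (so one of $u_i,u_j$ lies in $B$), together with a further use of the cactus restriction at $C$ and at the remaining neighbours $u_l$ of $x$, one forces the desired contradiction.

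I expect the last step to be the main obstacle: once a genuine $5$-cycle $C=(x,u_i,\bar u_i,\bar u_j,u_j)$ appears, the naive independent-set count drops to $\Gamma(G)$ (each such "bad cycle" costs one vertex), so a contradiction cannot be read off directly. Resolving it will require either a local surgery on $P$ — replacing one of $\bar u_i,\bar u_j$ by $x$, matching $x$ with the adjacent leaf, and deriving a contradiction from minimality of $P$ or from $\Gamma_{pr}(G)=2\Gamma(G)$ — or a sharper exploitation of the cactus condition in the neighbourhood of $C$ and of the third neighbour $u_l$ of $x$, both of which need care to make watertight.
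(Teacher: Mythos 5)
Your lower-bound argument ($|N(x)\cap P|\ge 2$) is correct and essentially the paper's: take the partner of the unique neighbour of $x$, add $x$, and fill up with one leaf of $G[P]$ from each other matched pair to get an independent set of size $\Gamma(G)+1$. For the upper bound you also set up the same configuration as the paper (the set of partners $\{\bar u_1,\dots,\bar u_k\}$ of the neighbours of $x$, which is the paper's set $Z$), and your treatment of the case where this set is independent is fine. The genuine gap is exactly the case you flag yourself: when two partners $\bar u_i\sim\bar u_j$ are adjacent, you produce the $5$-cycle $(x,u_i,\bar u_i,\bar u_j,u_j)$ and then only sketch possible ways forward ("local surgery", "sharper exploitation of the cactus condition") without carrying any of them out; as you note, the counting argument alone only reaches $\Gamma(G)$, so no contradiction follows from what you have written.

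The missing idea is to use minimality of $P$ at precisely this point. Since $\bar u_i\bar u_j\in E(G)$, the set $P\setminus\{u_i,u_j\}$ still dominates $u_i,u_j$ and still has a perfect matching (rematch $\bar u_i$ with $\bar u_j$), so Lemma~\ref{pdsminimality1} gives $|epn(u_i,u_j;P)|\ge 1$; let $y$ be such an external private neighbour, so $N(y)\cap P\subseteq\{u_i,u_j\}$. Because $x$ has a third neighbour in $P$, $y\neq x$. If $y$ were adjacent to both $u_i$ and $u_j$, the two $5$-cycles $(y,u_i,\bar u_i,\bar u_j,u_j)$ and $(x,u_i,\bar u_i,\bar u_j,u_j)$ would share the edge $\bar u_i\bar u_j$, contradicting the cactus hypothesis; hence $y$ has exactly one neighbour in $P$, contradicting the already-proved lower bound of two. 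This closes the adjacency case (and in fact shows $Z$ is always independent), after which your independent-set count finishes the proof; your side observations (the matching structure on bad partners, that $\bar u_i,\bar u_j$ cannot both lie in $B$) are not needed.
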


\begin{proof}
We first prove that if there exists a vertex $x$ in $V(G)\setminus P$, it has at least two neighbors in $P$.
\begin{claim}\label{rule1}
Every vertex $x$ in $V(G)\setminus P$ has at least two neighbors in $P$.
\end{claim}

\textbf{Proof of Claim \ref{rule1}}: Suppose to the contrary that there exists a vertex $x$ in $V(G)\setminus P$ which has exactly one neighbor in $P$, say $b_1$. By Lemma \ref{goldrule}, one vertex from each pair of matched vertices $(a_i,b_i)$ in $P$ is a leaf in $G[P]$. Let further $L'_p$ be a set containing one leaf in $G[P]$ from each pair of matched vertices $(a_i,b_i)$ for $2\leq i\leq \Gamma(G)$. Thus, $|L'_p|=\Gamma(G)-1$. Then, $\{a_1,x\}\cup L'_p$ is a minimal dominating set of size $2+\Gamma(G)-1=\Gamma(G)+1$, a contradiction to $B$ being a $\Gamma$-set of $G$. Therefore, each vertex $x$ in $V(G)\setminus P$ has at least two neighbors in $P$.\qed\par
Now we proceed by showing that the case of a vertex $x$ in $V(G)\setminus P$ with at least three neighbors in $P$ leads to a contradiction and complete the proof of Lemma \ref{privateneighbor}. Suppose to the contrary that $x$ is a vertex in $V(G)\setminus P$ with at least three neighbors in $P$. We define a set $Z$ as follows: for each pair of matched vertices $(a_i,b_i)$ in $P$, if $a_i\in N(x)$, put $b_i$ in $Z$; otherwise, if $b_i\in N(x)$, put $a_i$ in $Z$.\par 
We first show that $Z$ is an independent set. Suppose to the contrary that two vertices in $Z$, say $a_1$ and $a_2$ are adjacent. By definition of $Z$, the partners of these vertices, namely $b_1$ and $b_2$ are neighbors of $x$. Moreover, since $a_1$ and $a_2$ are adjacent, by Lemma \ref{pdsminimality1}, the vertices $b_1$ and $b_2$ have a private neighbor, say $y$, in $V(G)\setminus P$. Definitely, the vertex $y$ is different from $x$ since $x$ has at least three neighbors in $P$ and cannot be a private neighbor for $b_1$ and $b_2$. However, $y$ is adjacent to exactly one of $b_1$ or $b_2$ since otherwise we have two cycles $(yb_1a_1a_2b_2)$ and $(xb_1a_1a_2b_2)$ with a common edge $a_1a_2$, a contradiction to $G$ being a cactus graph. Thus, $y$ is adjacent to one of $b_1$ or $b_2$, say $b_1$. Then, $y$ is a vertex in $V(G)\setminus P$ with exactly one neighbor $b_1$ in $P$, a contradiction to Claim \ref{rule1}. Therefore, $Z$ is an independent set.\par
Let $L'_p$ be a set containing one leaf in $G[P]$ from each pair of matched vertices $(a_i,b_i)$ in $P$ such that neither $a_i$ nor $b_i$ is adjacent to $x$. It is obvious that $|L'_p|= \Gamma(G) - |Z|$. Then, $\{x\}\cup Z\cup L'_p$ is an independent set of size $\Gamma(G)+1$, which implies a minimal dominating set of size at least $\Gamma(G)+1$, a contradiction to $B$ being a $\Gamma$-set. Therefore, any vertex $x$ in $V(G)\setminus P$ has exactly two neighbors inside $P$.
\end{proof}

\begin{lemma}\label{partneradj}
Let $G$ be $C_3$-free cactus graph with $\Gamma_{pr}(G)= 2\Gamma(G)$. Let further $P$ be any $\Gamma_{pr}$-set of $G$. If there exists a vertex $x$ in $V(G)\setminus P$, then the partners of the two neighbors of $x$ in $P$ are adjacent.
\end{lemma}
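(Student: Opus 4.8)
The plan is to combine Lemma~\ref{privateneighbor} with the ``gold rule'' of Lemma~\ref{goldrule} in a short independence argument. By Lemma~\ref{privateneighbor}, the vertex $x$ has exactly two neighbours in $P$; call them $u$ and $w$, and let $u'$ and $w'$ denote their respective partners in the perfect matching of $G[P]$. I would first dispose of the degenerate possibility that $u$ and $w$ are partners of each other: in that case $u'=w$ and $w'=u$, so $u'w'=uw$ is an edge of the matching, hence of $G$, and there is nothing to prove. So assume henceforth that $\{u,u'\}$ and $\{w,w'\}$ are two distinct matched pairs, so that $u,u',w,w'$ are four distinct vertices, and suppose toward a contradiction that $u'w'\notin E(G)$.

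The key step is to exhibit an independent set of size $\Gamma(G)+1$. By Lemma~\ref{goldrule}, from each of the remaining $\Gamma(G)-2$ matched pairs I can select a vertex $\ell_i$ that is a leaf of $G[P]$, so that the only neighbour of $\ell_i$ in $P$ is its own partner. Consider $S=\{x,u',w'\}\cup\{\ell_i\}$. Then $|S|=3+(\Gamma(G)-2)=\Gamma(G)+1$, and $S$ is independent: the $\ell_i$ are pairwise non-adjacent and are adjacent to neither $u'$ nor $w'$, because each $\ell_i$'s unique $P$-neighbour lies in its own pair, which is different from the $u$-pair and the $w$-pair; $u'\not\sim w'$ by assumption; and $x$'s only neighbours in $P$ are $u$ and $w$, neither of which lies in $S$. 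Extending $S$ to a maximal independent set yields a minimal dominating set of size at least $\Gamma(G)+1$, a contradiction to $B$ being a $\Gamma$-set. Hence $u'w'\in E(G)$, as desired.

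The reason the argument is this short is that the substantive work — that $x$ has exactly two neighbours in $P$ (Lemma~\ref{privateneighbor}) and that every matched pair of $P$ contains a leaf of $G[P]$ (Lemma~\ref{goldrule}) — has already been done, and this is precisely where the $C_3$-free and cactus hypotheses enter. The only point that needs care is the verification that $S$ is independent, and specifically that $u'$ and $w'$ have no neighbour among the chosen leaves; this is exactly why one must take the leaf vertex of each remaining pair as its representative, since a leaf of $G[P]$ is adjacent inside $P$ only to its partner. One should also not overlook the degenerate case in which the two neighbours of $x$ are themselves partners, which is what lets us assume the four vertices $u,u',w,w'$ are distinct before running the counting.
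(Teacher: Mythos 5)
Your proposal is correct and takes essentially the same route as the paper: a contradiction is reached by building the independent set $\{x,u',w'\}$ together with one leaf of $G[P]$ from each remaining matched pair (the paper's $\{x,a_1,a_2\}\cup L'_p$), of size $\Gamma(G)+1$, using Lemmas \ref{privateneighbor} and \ref{goldrule}. Your extra care — explicitly extending to a maximal independent set and treating the degenerate case where the two neighbours of $x$ are partners (which in fact cannot occur, since $x$ and the matched edge would form a $C_3$) — only makes the same argument slightly more precise than the paper's.
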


\begin{proof}
 Let $x$ be a vertex in $V(G)\setminus P$. By Lemma \ref{privateneighbor}, the vertex $x$ has exactly two neighbors in $P$, say $b_1$ and $b_2$. Suppose to the contrary that the partners of $b_1$ and $b_2$, namely $a_1$ and $a_2$, are non-adjacent. By Lemma \ref{goldrule}, we know that at least one vertex from each pair of matched vertices $(a_i,b_i)$ in $P$ is a leaf in $G[P]$. Let $L'_p$ be the set containing one leaf in $G[P]$ from each pair of matched vertices $(a_i,b_i)$ for $3\leq i\leq \Gamma(G)$. Note that $|L'_p|=\Gamma(G) -2$. Thus, $\{x,a_1,a_2\}\cup L'_p$ is a minimal dominating set of size $\Gamma(G)+1$, a contradiction to $B$ being a $\Gamma$-set. Therefore, the partners of the neighbors of $x$ in $P$, namely $a_1$ and $a_2$, are adjacent.  
\end{proof}

\begin{lemma}\label{comneighinP}
Let $G$  be a $C_3$-free cactus graph with $\Gamma_{pr}(G)= 2\Gamma(G)$. Let further $P$ be any $\Gamma_{pr}$-set of $G$. If there exist two vertices $x_1$ and $x_2$ in $V(G)\setminus P$, then they have no common neighbor in $P$.
\end{lemma}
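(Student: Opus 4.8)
The plan is to argue by contradiction. Suppose $x_1$ and $x_2$ are distinct vertices of $V(G)\setminus P$ having a common neighbor $p\in P$. By Lemma~\ref{privateneighbor}, each $x_j$ has exactly two neighbors in $P$, so $N(x_j)\cap P=\{p,q_j\}$ for some $q_j\neq p$ ($j=1,2$), where possibly $q_1=q_2$. Let $p'$ denote the partner of $p$ and $q_j'$ the partner of $q_j$ under the fixed perfect matching of $G[P]$. Applying Lemma~\ref{partneradj} to $x_1$ and to $x_2$ yields $p'q_1'\in E(G)$ and $p'q_2'\in E(G)$.

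The core of the argument is to build, for each $j\in\{1,2\}$, the closed walk
\[
D_j:\qquad p \,-\, x_j \,-\, q_j \,-\, q_j' \,-\, p' \,-\, p ,
\]
all of whose consecutive pairs are edges of $G$: $px_j$ and $x_jq_j$ because $x_j$ is adjacent to both $p$ and $q_j$; $q_jq_j'$ and $p'p$ because these are matching edges; and $q_j'p'$ by the previous paragraph. I would then verify that $D_j$ is genuinely a $5$-cycle, i.e.\ that $p,x_j,q_j,q_j',p'$ are pairwise distinct. Since $x_j\notin P$ while the other four vertices lie in $P$, and since distinct vertices of $P$ have distinct partners, the only coincidences left to exclude are $p'=q_j$ and $q_j'=p$; but either of these would force $pq_j$ to be a matching edge, and together with $px_j,q_jx_j\in E(G)$ this creates the triangle $p\,q_j\,x_j$, contradicting that $G$ is $C_3$-free.

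To conclude, I would observe that $D_1$ and $D_2$ are \emph{distinct} cycles: $x_1\in V(D_1)$, whereas $x_1\notin V(D_2)=\{p,x_2,q_2,q_2',p'\}$ since $x_1\neq x_2$ and $x_1\notin P$. Yet both $D_1$ and $D_2$ contain the edge $pp'$, so this edge lies on two different cycles of $G$ --- contradicting the hypothesis that $G$ is a cactus. Hence $x_1$ and $x_2$ cannot share a neighbor in $P$.

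The step I expect to be the only subtle one is checking that each $D_j$ is a true $5$-cycle rather than a degenerate shorter walk; this is exactly where the $C_3$-freeness hypothesis enters. The remaining ingredients --- identifying the edge set of $D_j$ and verifying $D_1\neq D_2$ --- are routine bookkeeping, and the cactus property then yields the contradiction at once.
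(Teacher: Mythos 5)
Your proof is correct and follows essentially the same route as the paper: both arguments use Lemmas~\ref{privateneighbor} and~\ref{partneradj} to build two $5$-cycles, one through $x_1$ and one through $x_2$, that share an edge, contradicting the cactus property. The only differences are cosmetic: by routing both cycles through the matching edge $pp'$ at the common neighbor you avoid the paper's case split between one and two common neighbors, and you make explicit the $C_3$-freeness check that the cycles are non-degenerate, which the paper leaves implicit.
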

\begin{proof}  Suppose to the contrary that $x_1$ and $x_2$ are two vertices in $V(G)\setminus P$, which have common neighbors in $P$. By Lemma \ref{privateneighbor}, each of $x_1$ and $x_2$ has exactly two neighbors in $P$. Let $a_1$ and $a_2$ be the two neighbors of $x_1$ in $P$. By Lemma \ref{partneradj}, the partners of $a_1$ and $a_2$, namely $b_1$ and $b_2$, are adjacent. If $x_1$ and $x_2$ have two common neighbors in $P$, that is, if $x_2$ is also adjacent to $a_1$ and $a_2$, then we have two cycles $(x_1a_1b_1b_2a_2)$ and $(x_2a_1b_1b_2a_2)$ with a common edge $b_1b_2$, a contradiction to $G$ being a cactus graph. On the other hand, if $x_1$ and $x_2$ have only one common neighbor, say $a_2$, then $x_2$ has another neighbor in $P$, say $a_3$. By Lemma \ref{partneradj}, the partners of $a_2$ and $a_3$, namely $b_2$ and $b_3$, are adjacent. Then we have two cycles $(x_1a_1b_1b_2a_2)$ and $(x_2a_2b_2b_3a_3)$ with a common edge $a_2b_2$, a contradiction to $G$ being a cactus graph. Hence, $x_1$ and $x_2$ have no common neighbor in $P$.
\end{proof}

\begin{lemma}\label{PDelta2}
Let $G$ be a $C_3$-free cactus graph with $\Gamma_{pr}(G)= 2\Gamma(G)$. Let further $P$ be any $\Gamma_{pr}$-set of $G$. Then, $\Delta(G[P])\leq 2$.
\end{lemma}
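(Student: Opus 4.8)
The plan is a short argument by contradiction. Assume $\Delta(G[P])\ge 3$, say $\deg_{G[P]}(v)\ge 3$ for some $v\in P$; note this forces $|P|\ge 4$, so $G$ has order at least $3$ and the earlier lemmas are available. Fix a perfect matching $M$ of $G[P]$ and let $v'$ be the $M$-partner of $v$ (so $vv'\in M$; by Lemma~\ref{goldrule} $v'$ is in fact a leaf of $G[P]$, though this particular fact will not be needed). Since $\deg_{G[P]}(v)\ge 3$, the vertex $v$ has at least two neighbours in $G[P]$ other than $v'$; fix two of them, $u_1$ and $u_2$, and let $u_1',u_2'$ be their respective $M$-partners. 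A quick check using $u_i\ne v$ and uniqueness of $M$-partners shows the four vertices $v,v',u_i,u_i'$ are pairwise distinct, and in particular $v'\ne u_i'$.

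The key step is to apply Lemma~\ref{pdsminimality1} to the minimal PDS $P$, with the pair $\{v',u_i'\}$ in the role of $\{u,v\}$, for $i=1,2$. Indeed $P\setminus\{v',u_i'\}$ dominates $v'$ (via $v$) and dominates $u_i'$ (via $u_i$), and --- this is the crucial point --- $G[P\setminus\{v',u_i'\}]$ has the perfect matching $(M\setminus\{vv',u_iu_i'\})\cup\{vu_i\}$: deleting the two vertices $v'$ and $u_i'$ leaves $v$ and $u_i$ unmatched, and they can be rematched along the edge $vu_i\in E(G[P])$. Hence Lemma~\ref{pdsminimality1} yields a vertex $x_i\in epn(v',u_i';P)$; by definition $x_i\notin P$ and $N(x_i)\cap P\subseteq\{v',u_i'\}$. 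Since $x_i\notin P$, Lemma~\ref{privateneighbor} gives $|N(x_i)\cap P|=2$, so $N(x_i)\cap P=\{v',u_i'\}$ exactly; in particular $v'\in N(x_i)$.

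Finally, $u_1'\ne u_2'$ (they are the partners of the distinct vertices $u_1,u_2$), whence $N(x_1)\cap P=\{v',u_1'\}\ne\{v',u_2'\}=N(x_2)\cap P$, so $x_1\ne x_2$. Thus $x_1$ and $x_2$ are two distinct vertices of $V(G)\setminus P$ with the common neighbour $v'\in P$, contradicting Lemma~\ref{comneighinP}. Therefore $\Delta(G[P])\le 2$.

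The one place that needs genuine care is verifying that $(M\setminus\{vv',u_iu_i'\})\cup\{vu_i\}$ really is a perfect matching of the induced subgraph $G[P\setminus\{v',u_i'\}]$ --- i.e. that the discarded edges $vv'$ and $u_iu_i'$ are distinct, that the replacement edge $vu_i$ survives in the induced subgraph, and that every vertex of $P\setminus\{v',u_i'\}$ remains covered. Once this is settled, the rest is just checking that the hypotheses of Lemmas~\ref{pdsminimality1}, \ref{privateneighbor} and \ref{comneighinP} are met; notably, no property of $u_1,u_2$ beyond their being neighbours of $v$ distinct from $v'$ is ever used.
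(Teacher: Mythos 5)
Your proposal is correct and follows essentially the same route as the paper: for a vertex $v$ of degree at least $3$ in $G[P]$, apply Lemma~\ref{pdsminimality1} to the pair consisting of $v$'s partner and the partner of a non-partner neighbour of $v$, use Lemma~\ref{privateneighbor} to force the resulting external private neighbours to be adjacent to $v$'s partner, and contradict Lemma~\ref{comneighinP}. Your write-up merely makes explicit the matching-replacement check and the distinctness of the two external vertices, which the paper leaves implicit.
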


\begin{proof} Let $G$ be a $C_3$-free cactus graph with $\Gamma_{pr}(G)= 2\Gamma(G)$. Let further $P$ be any $\Gamma_{pr}$-set of $G$ which includes pairs of matched vertices $(a_i,b_i)$ for $1\leq i\leq \Gamma(G)$. Suppose to the contrary that a vertex in $P$, say $a_1$, has at least three neighbors in $P$. One of these three neighbors is the partner of $a_1$, namely $b_1$. Without loss of generality, let $b_2$ and $b_3$ be the other two neighbors of $a_1$ in $P$. Since $a_1$ is adjacent to $b_2$, by Lemma \ref{pdsminimality1}, we have $|epn(a_2,b_1;P)|\geq 1$, which implies that $a_2$ and $b_1$ have a private neighbor $x$ in $V(G)\setminus P$. By Lemma \ref{privateneighbor}, $x$ is adjacent to both $a_2$ and $b_1$. In addition, since $a_1$ is adjacent to $b_3$, by Lemma \ref{pdsminimality1}, we have $|epn(a_3,b_1;P)|\geq 1$. This implies that $a_3$ and $b_1$ have a private neighbor $y$ in $V(G)\setminus P$. By Lemma \ref{privateneighbor}, $y$ is adjacent to both $a_3$ and $b_1$. However, $x$ and $y$ are two vertices in $V(G)\setminus P$ with a common neighbor $b_1$ in $P$, a contradiction to Lemma \ref{comneighinP}. Thus, a vertex in $P$ has at most two neighbors in $P$; that is, $\Delta(G[P])\leq 2$.
\end{proof}

\begin{lemma}\label{rule7}
Let $G$ be a $C_3$-free cactus graph with $\Gamma_{pr}(G)= 2\Gamma(G)$. Let further $P$ be any $\Gamma_{pr}$-set of $G$. At most one vertex from each pair of matched vertices $(a_i,b_i)$ in $P$ has a neighbor in $V(G)\setminus P$.
\end{lemma}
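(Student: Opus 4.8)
The plan is to proceed by contradiction. Suppose some matched pair, say $(a_1,b_1)$, has both endpoints with a neighbor outside $P$; fix $x\in N(a_1)\setminus P$ and $y\in N(b_1)\setminus P$. The first easy observation is that $x\neq y$: otherwise $x$ would be adjacent to both $a_1$ and $b_1$, and together with the matching edge $a_1b_1$ this would form a triangle, contradicting that $G$ is $C_3$-free.

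Next I would invoke the structural lemmas already proved in this setting. By Lemma \ref{privateneighbor}, $x$ has exactly two neighbors in $P$; one of them is $a_1$, and I call the other one $u$, noting $u\neq b_1$ (again to avoid a triangle on $\{x,a_1,b_1\}$). By Lemma \ref{partneradj}, the partners of the two neighbors of $x$ in $P$ are adjacent; since the partner of $a_1$ is $b_1$, writing $u'$ for the partner of $u$ we obtain $b_1u'\in E(G)$. Concatenating $a_1-x-u-u'-b_1$ with the matching edge $b_1a_1$ then gives a closed walk of length $5$; a short check shows its five vertices $a_1,x,u,u',b_1$ are pairwise distinct (here $x\notin P$ separates $x$ from the rest, $u\notin\{a_1,b_1\}$, and $u'\in\{a_1,b_1\}$ would force $u\in\{b_1,a_1\}$ by uniqueness of partners), so it is a genuine $5$-cycle $C_x$ containing the edge $a_1b_1$. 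Running the same argument with $y$ and $b_1$ in place of $x$ and $a_1$ yields a second $5$-cycle $C_y$ on five distinct vertices, also containing $a_1b_1$, whose unique vertex outside $P$ is $y$.

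Finally, $C_x$ and $C_y$ are distinct, because $x\in V(C_x)\setminus P$ while $V(C_y)\setminus P=\{y\}$ and $x\neq y$. Thus $G$ contains two distinct cycles sharing the edge $a_1b_1$, contradicting the hypothesis that $G$ is a cactus graph. Hence no matched pair $(a_i,b_i)$ can have both $a_i$ and $b_i$ adjacent to a vertex of $V(G)\setminus P$, which is exactly the claim. The only point requiring care is verifying that each length-$4$ path actually closes up into a non-degenerate $5$-cycle through $a_1b_1$ and that the two resulting cycles are genuinely different; but this is entirely forced by $C_3$-freeness, membership in or out of $P$, and injectivity of the perfect matching, so it should present no real difficulty.
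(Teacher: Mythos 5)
Your proof is correct, but it follows a different route than the paper's. The paper also starts from a pair $(a_1,b_1)$ with $x_1\in N(b_1)\setminus P$ and $x_2\in N(a_1)\setminus P$, applies Lemma \ref{privateneighbor} to get second neighbors for $x_1$ and $x_2$ (using Lemma \ref{comneighinP} to keep these neighbors distinct), and then applies Lemma \ref{partneradj} twice to conclude that both $a_1$ and $b_1$ have degree at least $2$ in $G[P]$, which contradicts Lemma \ref{goldrule}. You instead use only Lemmas \ref{privateneighbor} and \ref{partneradj} to build, for each of $x$ and $y$, a $5$-cycle through the matching edge $a_1b_1$ (namely $a_1\hbox{-}x\hbox{-}u\hbox{-}u'\hbox{-}b_1\hbox{-}a_1$ and its analogue for $y$), check via $C_3$-freeness and uniqueness of partners that the five vertices are distinct, and note that the two cycles differ because their unique vertices outside $P$ are $x\neq y$; two distinct cycles through the edge $a_1b_1$ then contradict the cactus hypothesis directly. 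Your argument is more self-contained (it does not need Lemma \ref{goldrule} or Lemma \ref{comneighinP}) at the price of the explicit vertex-distinctness bookkeeping, whereas the paper's version is shorter given the machinery it has already established; in effect you re-derive locally the same ``two cycles sharing an edge'' contradiction that the paper packaged once inside Lemma \ref{goldrule} and then reuses. Both arguments are sound.
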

\begin{proof}
 Suppose to the contrary that there exists a pair of matched vertices in $P$, say $(a_1,b_1)$, such that both $a_1$ and $b_1$ have neighbors in $V(G)\setminus P$. Let further $x_1$ be the neighbor of $b_1$ and $x_2$ be the neighbor of $a_1$ in $V(G)\setminus P$. It is clear that $x_1\neq x_2$ since $G$ is a $C_3$-free graph. By Lemma \ref{privateneighbor}, $x_1$ has two neighbors in $P$. Hence, we may assume that $x_1$ is adjacent to another vertex in $P$, say $b_2$. Similarly, $x_2$ has two neighbors in $P$; however, by Lemma \ref{comneighinP}, $x_2$ has no common neighbor with $x_1$ in $P$. Thus, we may assume that $x_2$ is adjacent to $b_3$. By Lemma \ref{partneradj}, $a_1$ is adjacent to $a_2$ and $b_1$ is adjacent to $a_3$. Then, $(a_1,b_1)$ is a pair of matched vertices both of which have degree at least two in $G[P]$, a contradiction to Lemma \ref{goldrule}. Thus, at most one vertex from each pair of matched vertices $(a_i,b_i)$ in $P$ has a neighbor in $V(G)\setminus P$. 
\end{proof}

\begin{lemma}\label{final}
Let $G$ be a $C_3$-free cactus graph with $\Gamma_{pr}(G)= 2\Gamma(G)$. Let further $P$ be any $\Gamma_{pr}$-set of $G$. Then any two vertices $x_1$ and $x_2$ in $V(G)\setminus P$ are non-adjacent.
\end{lemma}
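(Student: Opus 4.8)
The plan is to argue by contradiction: suppose $x_1$ and $x_2$ are two adjacent vertices of $V(G)\setminus P$. By Lemma~\ref{privateneighbor}, each of $x_1$ and $x_2$ has exactly two neighbors in $P$, and by Lemma~\ref{comneighinP} these two pairs of neighbors are disjoint. So write $N(x_1)\cap P=\{a_1,a_2\}$ and $N(x_2)\cap P=\{a_3,a_4\}$ with $\{a_1,a_2\}\cap\{a_3,a_4\}=\emptyset$. By Lemma~\ref{partneradj}, the partners of $a_1,a_2$ are adjacent and the partners of $a_3,a_4$ are adjacent; relabel so that these partners are $b_1,b_2$ and $b_3,b_4$ respectively, with $b_1b_2\in E(G)$ and $b_3b_4\in E(G)$. (Here I am using the freedom in the labelling of each matched pair, justified by the fact that for each $i$ at least one of $a_i,b_i$ is a leaf in $G[P]$ by Lemma~\ref{goldrule}, and the vertex of the pair that a given $x$ attaches to is the non-leaf one; so the neighbors of $x_1,x_2$ in $P$ play the role of the ``$a$'' vertices.) Note also that $a_1,a_2,a_3,a_4$ are pairwise distinct and that $b_1,b_2,b_3,b_4$ are distinct from all of them and from each other, since $B$ is independent and each $x$-vertex's neighbors are non-leaves.

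The core of the argument is then to exhibit an independent set of size $\Gamma(G)+1$, contradicting that $B$ is a $\Gamma$-set. The natural candidate is built from $\{x_1,x_2\}$ together with one carefully chosen vertex from each of the remaining matched pairs. For the pairs $(a_i,b_i)$ with $i\notin\{1,2,3,4\}$, put into the set a leaf of $G[P]$ from that pair (as in $L_p$); such a leaf is not adjacent to any other vertex of $P$ and, since $G$ is a cactus with the degree restrictions already established, one checks it cannot be adjacent to $x_1$ or $x_2$ either (its only neighbor in $P$ is its partner, and a neighbor outside $P$ would be a separate $x$-vertex which, by Lemma~\ref{comneighinP}, shares no $P$-neighbor with $x_1,x_2$, but we must still rule out adjacency of that leaf to $x_1$ or $x_2$ directly — which follows because a leaf in $G[P]$ has degree $1$ in $G[P]$ and, by Lemma~\ref{rule7}, if its partner already has a neighbor outside $P$, it does not). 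For the four distinguished pairs, I would take $b_1,b_2,b_3,b_4$ themselves: each $b_i$ for $i\in\{1,2,3,4\}$ is a non-leaf whose partner $a_i$ is a neighbor of some $x_j$, and $b_i\in B$ so it is non-adjacent to every other vertex of $B$. But $b_1b_2\in E(G)$ and $b_3b_4\in E(G)$, so I cannot take all four; instead take exactly one $b$ from each adjacent pair, say $b_1$ and $b_3$, giving $\{x_1,x_2,b_1,b_3\}\cup\{$leaves from the other $\Gamma(G)-4$ pairs$\}$, which has size $2+2+(\Gamma(G)-4)=\Gamma(G)$ — not yet a contradiction. The trick is that the matched pairs $(a_2,b_2)$ and $(a_4,b_4)$ each contribute an \emph{additional} vertex: since $b_1b_2,b_3b_4\in E(G)$ we may also add $a_2$ and $a_4$ provided $a_2,a_4$ are non-adjacent to everything already chosen. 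Here the cactus condition does the work: $a_2$ is adjacent to $x_1$ and to $b_2$ and (as a leaf, if it is one) nothing else in $P$; a short case analysis using that every edge lies on at most one cycle shows $a_2\not\sim a_4$, $a_2\not\sim x_2$, $a_2\not\sim b_3$, etc., because any such adjacency together with the $5$-cycle $(x_1,a_1,b_1,b_2,a_2)$ or the edge-path through $x_2$ would create two cycles sharing an edge.

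Assembling this, the set $\{x_1,x_2,a_2,a_4,b_1,b_3\}\cup\{\text{one leaf from each of the other }\Gamma(G)-4\text{ pairs}\}$ is independent and has size $6+(\Gamma(G)-4)=\Gamma(G)+2$, which more than suffices: it yields a maximal independent set, hence a minimal dominating set, of size at least $\Gamma(G)+1$, contradicting the maximality of $B$. (If some of $a_1,a_2,a_3,a_4$ fail to be leaves, Lemma~\ref{goldrule} guarantees the corresponding $b_i$ is a leaf and one swaps roles; the degree bound $\Delta(G[P])\le 2$ from Lemma~\ref{PDelta2} keeps all these local configurations under control.) Therefore no two vertices of $V(G)\setminus P$ are adjacent.

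The step I expect to be the main obstacle is the verification that the chosen vertices $a_2,a_4$ (and the leaves from the untouched pairs) are genuinely non-adjacent to $x_1,x_2$ and to each other: this is where the cactus hypothesis must be invoked repeatedly, each time by displaying two cycles sharing an edge. Organizing these sub-cases cleanly — rather than letting them proliferate — is the delicate part, and I would handle it by first recording once and for all the two $5$-cycles $C=(x_1,a_1,b_1,b_2,a_2)$ and $C'=(x_2,a_3,b_3,b_4,a_4)$ forced by Lemmas~\ref{privateneighbor}--\ref{partneradj}, and then noting that any forbidden edge would complete a second cycle through an edge of $C$ or $C'$ (or through the edge $x_1x_2$), contradicting the cactus property in one stroke.
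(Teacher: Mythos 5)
There is a genuine gap: the set you exhibit is not independent. You assume (for contradiction) that $x_1x_2\in E(G)$, and in your labelling $a_2\in N(x_1)$ and $a_4\in N(x_2)$; yet your final candidate $\{x_1,x_2,a_2,a_4,b_1,b_3\}\cup\{\text{leaves}\}$ contains all of $x_1,x_2,a_2,a_4$, so it contains the edges $x_1x_2$, $x_1a_2$ and $x_2a_4$. You even record ``$a_2$ is adjacent to $x_1$'' while adding both to the set, and the suspicious size $\Gamma(G)+2$ is a symptom of this. Moreover, the flaw is not repairable by swapping which vertices you pick: the hypotheses force two $5$-cycles $(x_1,a_1,b_1,b_2,a_2)$ and $(x_2,a_3,b_3,b_4,a_4)$ joined by the edge $x_1x_2$ (also, contrary to your parenthetical remark, the vertex of each pair that $x_j$ attaches to is the \emph{leaf} of that pair in $G[P]$, since by Lemma~\ref{partneradj} its partner already has degree $2$ there); any independent set meets each $C_5$ in at most two vertices, so from these ten vertices you get at most four, and adding one leaf from each of the remaining $\Gamma(G)-4$ pairs yields at most $\Gamma(G)$ — never $\Gamma(G)+1$. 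So a contradiction of the form ``independent set of size $\Gamma(G)+1$'' cannot be extracted from this local configuration by your recipe.

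The paper's proof takes a different (and necessary) route at exactly this point: instead of an independent set, it builds a \emph{minimal dominating} set that is allowed to contain edges, namely $\{x_1,b_1,b_2,a_3,a_4\}\cup\bigl(B\setminus\{b_1,b_2,b_3,b_4\}\bigr)$ in the paper's labelling (where $x_1\sim b_1,b_2$ and $x_2\sim b_3,b_4$, with $a_1a_2,a_3a_4\in E(G)$). This set has size $5+\Gamma(G)-4=\Gamma(G)+1$, and minimality is what uses the assumed edge $x_1x_2$: the vertex $x_2$ serves as a private neighbour of $x_1$, while $a_1,a_2,b_3,b_4$ serve as private neighbours of $b_1,b_2,a_3,a_4$ respectively. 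In other words, the edge $x_1x_2$ is exploited as a source of a private neighbour rather than fought against inside an independent set; that is the idea your proposal is missing.
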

\begin{proof}Let $G$ be a $C_3$-free cactus graph with $\Gamma_{pr}(G)= 2\Gamma(G)$. Let further $P$ be any $\Gamma_{pr}$-set of $G$ which includes pairs of matched vertices $(a_i,b_i)$ for $1\leq i\leq \Gamma(G)$. Suppose to the contrary that $x_1$ and $x_2$ are two adjacent vertices in $V(G)\setminus P$. We know that by Lemma \ref{privateneighbor} and Lemma \ref{comneighinP}, $x_1$ is adjacent to exactly two vertices in $P$, say $\{b_1,b_2\}$, and $x_2$ is adjacent to two different vertices, say $\{b_3,b_4\}$. By Lemma \ref{partneradj}, the partners of $b_1$ and $b_2$, namely $a_1$ and $a_2$, and the partners of $b_3$ and $b_4$, namely $a_3$ and $a_4$, are adjacent. By Lemma \ref{PDelta2}, $a_1$, $a_2$, $a_3$, and $a_4$ have no other neighbors in $G[P]$. Then there exists an independent set $I= B\setminus \{b_1,b_2,b_3,b_4\}$ in $G[P]$ such that $|I|=\Gamma - 4$. Then $\{x_1, b_1,b_2, a_3,a_4\}\cup I$ is a minimal dominating set of size $5+|I|=5+\Gamma(G) -4= \Gamma(G)+1$, a contradiction to $B$ being a $\Gamma$-set. Therefore, any two vertices $x_1$ and $x_2$ in $V(G)\setminus P$ are non-adjacent.
\end{proof}

Now we are ready to give our main result in this section in Theorem \ref{cactusresult}, which describes the structure of $C_3$-free cactus graphs with $\Gamma_{pr}(G)= 2\Gamma(G)$. Notice that the graph $m_1C_5+m_2K_2$, which is stated in Theorem \ref{cactusresult}, is a graph composed of $m_1$ copies of disjoint $C_5$ and $m_2$ copies of disjoint $K_2$.
\begin{theorem}\label{cactusresult}
Let $G$ be a $C_3$-free cactus graph. Then $\Gamma_{pr}(G)= 2\Gamma(G)$ if and only if $G$ is isomorphic to $m_1K_2+m_2C_5$ for $m_1+m_2\geq 1$.  
\end{theorem}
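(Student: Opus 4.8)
The plan is to settle the forward direction by reconstructing $G$ completely from the structural lemmas above, applied throughout to the fixed $\Gamma_{pr}$-set $P=A\cup B$ ($B$ an independent $\Gamma$-set, $A$ its partners); the converse is a short computation. First I would pin down the shape of $G[P]$. By Lemma~\ref{PDelta2} we have $\Delta(G[P])\le 2$, so $G[P]$ is a disjoint union of paths and cycles, and since $G[P]$ carries the perfect matching $M=\{a_ib_i\}$, every component has even order. A cycle component would be some $C_{2k}$ with $k\ge 2$ ($G$ being $C_3$-free), and then a matched pair lying on it has both endpoints of degree $2$ in $G[P]$, contradicting Lemma~\ref{goldrule}; likewise a path component $P_{2k}$ with $k\ge 3$ contains, in its unique perfect matching, a matched pair of two internal (hence degree-$2$) vertices, again contradicting Lemma~\ref{goldrule}. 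Hence every component of $G[P]$ is a $K_2$ or a $P_4$. I would then observe that a $K_2$-component $\{a,b\}$ of $G[P]$ is already a component of $G$: if $x\in V(G)\setminus P$ were adjacent to $b$, then by Lemma~\ref{privateneighbor} $x$ has a second neighbour $b'\in P$, and by Lemma~\ref{partneradj} the partner $a$ of $b$ must be adjacent to the partner of $b'$, which is impossible since the only $P$-neighbour of $a$ is $b$. These components yield the $m_1$ copies of $K_2$.

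The crux is the treatment of a $P_4$-component $w_1w_2w_3w_4$ of $G[P]$, with ends $w_1,w_4$ and matched pairs $\{w_1,w_2\}$, $\{w_3,w_4\}$. If neither $w_1$ nor $w_4$ had a neighbour in $V(G)\setminus P$, both would be pendant vertices of $G$; a short argument using Lemmas~\ref{privateneighbor} and \ref{partneradj} then forces $w_2,w_3$ to have no outside neighbours either (an outside neighbour $y$ of $w_2$ would have a second $P$-neighbour $z$ with $\mathrm{partner}(z)$ adjacent to $\mathrm{partner}(w_2)=w_1$, hence $z=w_1$, contradicting that $w_1$ is pendant), so $\{w_1,w_2,w_3,w_4\}$ would be an isolated $P_4$-component of $G$ lying entirely in $P$; but the restriction of the minimal PDS $P$ to a component is a minimal PDS of that component, and $P_4$ itself is not minimal since it contains the PDS $\{w_2,w_3\}$, a contradiction. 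So, up to the symmetry $w_1\leftrightarrow w_4$, some $x\in V(G)\setminus P$ is adjacent to $w_1$. By Lemma~\ref{privateneighbor}, $N(x)\cap P=\{w_1,z\}$; since $\mathrm{partner}(z)$ must be a $P$-neighbour of $\mathrm{partner}(w_1)=w_2$ and those are exactly $w_1,w_3$, while $z=w_1$ is excluded ($|N(x)\cap P|=2$) and $z=w_2$ is excluded (it would create a triangle $xw_1w_2$ in the $C_3$-free graph $G$), we get $z=w_4$. Lemma~\ref{final} then gives $N(x)=\{w_1,w_4\}$, so $\{w_1,w_2,w_3,w_4,x\}$ induces exactly the $5$-cycle $w_1w_2w_3w_4x$. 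Finally, Lemma~\ref{rule7} (applied to the pairs $\{w_1,w_2\}$ and $\{w_3,w_4\}$) shows $w_2,w_3$ have no outside neighbours, and Lemmas~\ref{privateneighbor}, \ref{partneradj}, \ref{comneighinP} show $w_1,w_4,x$ have no further neighbours, so this $C_5$ is a full component of $G$; by Lemma~\ref{comneighinP} it is the unique one attached to that $P_4$.

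To finish the forward direction, note that each $x\in V(G)\setminus P$ has, by the argument above, its two $P$-neighbours equal to the two ends of some $P_4$-component, so $V(G)\setminus P$ is partitioned among the $C_5$'s; combined with the first paragraph, $G\cong m_1K_2+m_2C_5$, where $m_1,m_2$ count the $K_2$- and $P_4$-components of $G[P]$, and $m_1+m_2\ge 1$ since $\Gamma_{pr}(G)\ge 2$ forces $P\neq\emptyset$. For the converse, $\Gamma$ and $\Gamma_{pr}$ are additive over connected components, and (as already recorded for Theorem~\ref{unicycGp=2G}) $\Gamma(K_2)=1$, $\Gamma_{pr}(K_2)=2$, $\Gamma(C_5)=2$, $\Gamma_{pr}(C_5)=4$, so $\Gamma_{pr}(G)=2m_1+4m_2=2(m_1+2m_2)=2\Gamma(G)$. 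The main obstacle is the middle paragraph: ruling out a stand-alone $P_4$-component of $G$ (handled via the minimality of $P$ on each component, since $P_4$ is never a minimal PDS) and then forcing the single outside vertex dangling on a $P_4$-component to be joined to precisely its two end-vertices and to nothing else, so that the $P_4$ closes up into an isolated $C_5$; everything else is bookkeeping on top of Lemmas~\ref{goldrule}--\ref{final}.
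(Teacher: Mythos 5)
Your proof is correct, and it reaches the conclusion by a noticeably different assembly than the paper, even though both arguments run on the same engine, namely Lemmas~\ref{goldrule}--\ref{final} applied to the set $P=A\cup B$ from Lemma~\ref{independentGamma}. The paper splits on whether $V(G)\setminus P$ is empty: if it is, it invokes Ulatowski's characterization (Lemma~\ref{Gammapr=n}) to get $m_1K_2$; if not, it grows a $C_5$ around each outside vertex and then asserts rather tersely that everything left over consists of $K_2$-components. You instead never use Lemma~\ref{Gammapr=n}: you first pin down $G[P]$ itself, observing that $\Delta(G[P])\le 2$ (Lemma~\ref{PDelta2}) plus the perfect matching plus Lemma~\ref{goldrule} forces every component of $G[P]$ to be $K_2$ or $P_4$ (cycles and longer paths always contain a matched pair of degree-$2$ vertices), and then you show each $K_2$-component is a component of $G$, each $P_4$-component must pick up exactly one outside vertex joined to its two ends and close into a $C_5$ (a standalone $P_4$ being excluded because the restriction of a minimal PDS to a component is a minimal PDS of that component, and $V(P_4)$ is not one). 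This buys you an explicit justification of the step the paper leaves implicit --- why the matched pairs not involved with any outside vertex really are isolated $K_2$'s --- at the cost of two extra (easy) facts the paper does not need: the uniqueness of the perfect matching of a path, and the component-wise minimality of a PDS. One small point to add: in your argument that a $K_2$-component $\{a,b\}$ of $G[P]$ has no outside neighbour, the case where the second $P$-neighbour $b'$ of $x$ equals $a$ is not excluded by ``the only $P$-neighbour of $a$ is $b$'' (then the partners of $b,b'$ are $a,b$ themselves, which \emph{are} adjacent); it is excluded because $x,a,b$ would form a triangle, exactly as you argue later when you rule out $z=w_2$ in the $P_4$ case.
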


\begin{proof}
Let $G$ be a $C_3$-free cactus graph with $\Gamma_{pr}(G)= 2\Gamma(G)$. By Lemma \ref{independentGamma}, $G$ has a $\Gamma_{pr}$-set $P$ with an independent $\Gamma$-set $B$ inside it. Let further $A$ be the set of partners of the vertices in $B$. Hence, $P=A\cup B$. Moreover, $P$ has a perfect matching including pairs of matched vertices $(a_i,b_i)$ for $1\leq i\leq \Gamma(G)$. We start with the case where there exist no vertices in $V(G)\setminus P$, that is, $\Gamma_{pr}(G)= n$. By Lemma \ref{Gammapr=n}, $G$ is isomorphic to $m_1K_2$ for $m_1\geq 1$, which is a cactus graph and we are done with this case.\par 
Next, we proceed with the case where there exists at least one vertex $x_1$ in $V(G)\setminus P$, that is, $\Gamma_{pr}(G)\leq n-1$. By Lemma \ref{privateneighbor}, $x_1$ has two neighbors in $P$, say $b_1$ and $b_2$. By Lemma \ref{partneradj}, the partners of $b_1$ and $b_2$, namely $a_1$ and $a_2$, are adjacent. Since $a_1$ and $a_2$ each has two neighbors in $P$, by Lemma \ref{PDelta2}, they have no other neighbors in $P$. By Lemma \ref{rule7}, $a_1$ and $a_2$ have no neighbors in $V(G)\setminus P$ since their partners, namely $b_1$ and $b_2$, have a neighbor $x_1$ in $V(G)\setminus P$. As $a_1$ and $a_2$ each has two neighbors in $G[P]$, by Lemma \ref{goldrule}, their partners, namely $b_1$ and $b_2$, are only adjacent to their partners and have no other neighbors in $G[P]$. Moreover, $b_1$ and $b_2$ have no neighbors in $V(G)\setminus P$ other than $x_1$ by Lemma \ref{comneighinP}. The vertex $x_1$ has two neighbors $b_1$ and $b_2$ in $P$ and has no other neighbors in $V(G)\setminus P$ by Lemma \ref{final}. Hence, the vertices $\{x_1, b_1, a_1, a_2, b_2\}$ form a disjoint 5-cycle in $G$. We can make the previous arguments for any vertex in $V(G)\setminus P$; that is, any vertex in $V(G)\setminus P$ together with four vertices from $P$ form a disjoint 5-cycle in $G$. Therefore, $G$ is composed of components which are either $K_2$ or $C_5$.\par 
For the converse direction, it can easily be verified that if $G$ is isomorphic to $m_1K_2+m_2C_5$ for $m_1+m_2\geq 1$, then we have that $\Gamma(m_1K_2+m_2C_5)=m_1+2m_2$, and $\Gamma_{pr}(m_1K_2+m_2C_5)=2m_1+4m_2$ and hence $\Gamma_{pr}(m_1K_2+m_2C_5)=2\Gamma(m_1K_2+m_2C_5)$.  

\end{proof}

An immediate result of Theorem \ref{cactusresult} for connected graphs is stated in Corollary \ref{congirth5}.
\begin{corollary}\label{congirth5}
Let $G$ be a connected $C_3$-free cactus graph. Then $\Gamma_{pr}(G)= 2\Gamma(G)$ if and only if $G$ is either $C_5$ or $K_2$. 
\end{corollary}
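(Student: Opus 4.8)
The plan is to derive Corollary \ref{congirth5} directly from Theorem \ref{cactusresult}, which has already done all the structural heavy lifting. Since $G$ is a connected $C_3$-free cactus graph, Theorem \ref{cactusresult} tells us that $\Gamma_{pr}(G)= 2\Gamma(G)$ holds if and only if $G$ is isomorphic to $m_1K_2 + m_2 C_5$ for some $m_1 + m_2 \geq 1$. The only remaining work is to impose connectivity on this disjoint union.

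First I would argue the forward direction: suppose $G$ is connected and $\Gamma_{pr}(G)= 2\Gamma(G)$. By Theorem \ref{cactusresult}, $G \cong m_1K_2 + m_2C_5$ with $m_1 + m_2 \geq 1$. A disjoint union of $m_1$ copies of $K_2$ and $m_2$ copies of $C_5$ is connected if and only if it consists of exactly one component, i.e. either $m_1 = 1, m_2 = 0$ (giving $G \cong K_2$) or $m_1 = 0, m_2 = 1$ (giving $G \cong C_5$). Hence $G$ is either $K_2$ or $C_5$.

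For the converse, I would simply invoke the equalities already recorded in the proofs of Theorem \ref{unicycGp=2G} and Theorem \ref{cactusresult}: $\Gamma(K_2) = 1$ and $\Gamma_{pr}(K_2) = 2$, so $\Gamma_{pr}(K_2) = 2\Gamma(K_2)$; and $\Gamma(C_5) = 2$ and $\Gamma_{pr}(C_5) = 4$, so $\Gamma_{pr}(C_5) = 2\Gamma(C_5)$. Both $K_2$ and $C_5$ are connected $C_3$-free cactus graphs, which completes the proof.

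There is essentially no obstacle here — the corollary is a routine specialization of Theorem \ref{cactusresult} to the connected case, and the entire argument is just the observation that a disjoint union of at least one component is connected precisely when it has exactly one component. The only thing to be careful about is not to re-prove the structural result; the proof should be a short paragraph that cites Theorem \ref{cactusresult} and checks the two base cases.
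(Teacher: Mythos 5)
Your proposal is correct and matches the paper's treatment: the paper states Corollary \ref{congirth5} as an immediate consequence of Theorem \ref{cactusresult}, and your argument (connectivity forces exactly one component of $m_1K_2+m_2C_5$, plus the routine check that $K_2$ and $C_5$ satisfy the equality) is precisely that specialization.
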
\par

Note that some of the arguments used in Lemmas \ref{goldrule}-\ref{final} are not restricted to cactus graphs and can be used for the general case of $C_3$-free graphs. Then the question that arises here is whether all lemmas mentioned above can be extended for the general case of $C_3$-free graphs with $\Gamma_{pr}(G)= 2\Gamma(G)$. Hence, we pose the following as an open question:\\\\
\textbf{Question}: Does Theorem \ref {cactusresult} hold for $C_3$-free graphs?

\newpage

\section*{Acknowledgment}
This work is supported by the Scientific and Technological Research Council of Turkey (TUBITAK) under grant no. 118E799. The work of Didem Gözüpek is also supported by the BAGEP Award of the Science Academy of Turkey.

\bibliographystyle{abbrvnat}
\bibliography{sample-dmtcs}
\label{sec:biblio}

\end{document}